\newtheorem{theorem}{Theorem}
\newtheorem{lemma}[theorem]{Lemma}
\newtheorem{corollary}[theorem]{Corollary}
\theoremstyle{definition}
\newcommand{\qedclaim}{\hfill $\diamond$ \medskip}
\newcommand{\loc}[1]{\zeta(#1)}
\begin{document}

\title{The localization number and metric dimension of graphs of diameter 2}\thanks{The first and second authors were supported by NSERC}

\author[A.\ Bonato]{Anthony Bonato}
\author[M. A.\ Huggan]{Melissa A. Huggan}
\author[T.\ Marbach]{Trent Marbach}
\address[A1, A2, A3]{Ryerson University, Toronto, Canada}
\email[A1]{(A1) abonato@ryerson.ca}
\email[A2]{(A2) melissa.huggan@ryerson.ca}
\email[A3]{(A3) trent.marbach@ryerson.ca}

\begin{abstract}
We consider the localization number and metric dimension of certain graphs of diameter $2$, focusing on families of Kneser graphs and graphs without 4-cycles. For the Kneser graphs with diameter $2$, we find upper and lower bounds for the localization number and metric dimension, and in many cases these parameters differ only by an additive constant. Our results on the metric dimension
of Kneser graphs improve on earlier ones, yielding exact values in infinitely many cases. We determine bounds on the localization number and metric dimension of Moore graphs of diameter $2$ and polarity graphs.
\end{abstract}

\subjclass[2010]{05C57, 05C65}

\maketitle

\section{Introduction}

\emph{Graph searching} considers combinatorial models for the detection or neutralization of an adversary's activity on a graph. Such models often focus on vertex-pursuit games, where agents or cops are attempting to capture an adversary or robber loose on the vertices of a graph. The players move at alternating ticks of the clock, and have restrictions on their movements or relative speed depending on the game played. The most studied such game is Cops and Robbers, where the cops and robber can only move to vertices with which they share an edge. The cop number is the minimum number of cops needed to guarantee the robber's capture. How the players move and the rules of capture depend on which variant is studied. These variants are motivated by problems in practice or inspired by foundational issues in computer science, discrete mathematics, and artificial intelligence, such as robotics and network security. For a survey of graph searching, see~\cite{bp,by,fomin}, and see \cite{BN} for more background on Cops and Robbers.

We investigate the localization game and metric dimension in the present work. In the localization game, two players operate on a connected graph, with one player controlling a set of $k$ \emph{cops}, where $k$ is a positive integer, and the second controlling a single \emph{robber}. Unlike in Cops and Robbers, the cops play with imperfect information: the robber is invisible to the cops during gameplay. The game is played over a sequence of discrete time-steps; a \emph{round} is a cop move and a subsequent robber move. The robber occupies a vertex of the graph, and when the robber is ready to move during a round, he may move to a neighboring vertex or remain on his current vertex. A move for the cops is a placement of cops on a set of vertices. Note that the cops are not limited to moving to neighboring vertices. At the beginning of the game, the robber chooses his starting vertex. After this, the cops move first, followed by the robber; thereafter, they move on alternate time-steps. Observe that any subset of cops may move in a given round. In each round, the cops occupy a set of vertices $u_1, u_2, \ldots , u_k$ and each cop sends out a \emph{cop probe}, which gives their distance $d_i$, from $u_i$ to the robber, where $1\le i \le k$. Hence, in each round, the cops determine a \emph{distance vector} $(d_1, d_2, \ldots , d_k)$ of cop probes, which is unique up to the ordering of the cops. Note that relative to the placement of the cops, there may be more than one vertex with the same distance vector.
For example, in an $n$-vertex clique with a single cop, so long as the cop is not on the robber's vertex, there are $n-1$ such vertices.
The cops win if they have a strategy to determine, after finitely many rounds, the vertex the robber occupies, at which time we say that the cops {\em capture} the robber.
If the robber is not located, then the robber may move in the next round, and the cops may move to other vertices resulting in an updated distance vector. The robber wins if he is never captured.

For a connected graph $G$, define the \emph{localization number} of $G$, written $\loc{G}$, to be the least integer $k$ for which $k$ cops have a winning strategy over any possible strategy of the robber (that is, we consider the worst case for the cops in that the robber a priori knows the entire strategy of the cops).
As placing a cop on each vertex gives a distance vector containing a $0$, which corresponds to the location of the robber, $\loc{G}$ is at most $n$ and so is well-defined. The localization game was first introduced for one cop by Seager~\cite{seager1,seager2} and was further studied in, for example, \cite{BHM,BK,nisse1,BDELM,car,DEFMP,DFP,has}.

In \cite{nisse1}, Bosek et al.\ showed that $\loc{G}$ is bounded above by the pathwidth of $G$ and that the localization number is unbounded even on graphs obtained by adding a universal vertex to a tree. They also proved that computing $\loc{G}$ is \textbf{NP}-hard for graphs with diameter $2$, and they studied the localization game for geometric graphs. In \cite{DFP}, the localization number was studied for binomial random graphs with diameter $2$, with further work on the localization number of random graphs done in \cite{DEFMP}. Bonato and Kinnersley~\cite{BK} studied the localization number for graphs based on their degeneracy. In ~\cite{BK}, they resolved a conjecture of Bosek et al.~\cite{nisse1} relating $\loc{G}$ and the chromatic number; further, they proved that the localization number of outerplanar graphs is at most 2, and they proved an asymptotically tight upper bound on the localization number of the hypercube.
The localization number of the incidence graphs of designs was studied in \cite{BHM}. In particular, they gave exact values for the localization number of the incidence graphs of projective and affine planes, and bounds for the incidence graphs of Steiner systems and transversal designs.

The \emph{metric dimension} of a graph $G$, written $\beta(G)$ (also referred to as $\mu(G)$ and $\mathrm{dim}(G)$ in the literature), is the minimum number of cops needed in the localization game so that the cops can win in one round. Hence, $\loc{G} \le \beta(G)$, but in many cases this inequality is far from tight. Metric dimension was introduced in the 1970s by Slater \cite{slater} and, independently, by Harary and Melter \cite{hm}.
A \emph{resolving set} is a set of $\beta(G)$ vertices that the cops can play on to win the localization game in one round.
A survey on metric dimension and related concepts may be found in \cite{bc}. Graphs of diameter $2$ with metric dimension $2$ were characterized in \cite{kth}.

For a complete graph of order $n$ (that is, a graph of diameter 1), the metric dimension and localization numbers are equal to $n-1.$ For graphs of diameter $2$, where distance probes return either 0, 1, or 2, the determination of these parameters is a much more elusive problem. In this paper, we focus on the localization number of certain graphs of diameter $2$; in particular, the Kneser graphs of diameter $2$ and diameter 2 graphs without 4-cycles.

The first family we consider are Kneser graphs, which are a well-known family of non-intersection graphs. For integers $k,n \ge 1$ with $n > k,$ the \emph{Kneser graph} $K(k,n)$ has vertices labeled by the $k$-tuples on $[n] = \{ 1,2, \ldots, n \},$ with two vertices adjacent if and only if their vertex labels are disjoint. Kneser graphs were introduced by Lov\'asz \cite{lovasz} to resolve Kneser's conjecture on their chromatic number. In Section~\ref{sec:MetricDim}, we study the Kneser graphs that have diameter $2$ and find upper and lower bounds for the localization number and metric dimension of these graphs, which in many cases differ only by an additive constant; see Theorem~\ref{finall}. While the work on the localization number of these graphs is new, the metric dimension for Kneser graphs was previously studied in \cite{bcggmmp,bc}. Before this work, no asymptotically tight results were known for Kneser graphs for infinite families when $k$ is a fixed constant. For each fixed even $k\geq 4$, the results of this paper give the exact value of the metric dimension and localization number up to an additive constant of an infinite subclass of Kneser graphs. In particular, for a fixed even $k\geq 6$, Corollary~\ref{cordimd} and Lemma~\ref{lemdimd} provide that $\beta(K(k,n)) = n/2+n/k$ for an infinite number of values of $n$.

In Section~\ref{secc4}, we consider graphs of diameter $2$ with no $4$-cycles as subgraphs. As proven in \cite{bef}, there are three subclasses of graphs that have diameter $2$ and contain no $4$-cycle: graphs with maximum degree $n-1$, the Moore graphs, and the polarity graphs. We define the latter two graph families in Section~\ref{secc4}. The family of graphs with maximum degree $n-1$ and no $4$-cycles are the graph with a universal vertex $u$, which when removed leaves isolated vertices (that is, vertices of degree 0) or paths of length two.
The localization number of this graph will be $1$ if removing the universal vertex leaves $n-1$ isolated vertices, and $2$ otherwise. If $k$ is the number of connected components remaining when the universal vertex is deleted, then the metric dimension of this graph will be $k$, except in the case of $3$-vertex clique where it is $2.$ We consider the remaining two graph families. In Section~\ref{secc4}, we bound the metric dimension and  localization number of the Moore graphs of diameter $2$, including the Hoffman-Singleton graph. In particular, we show in Theorem~\ref{thm:MooreMD} that a $k$-regular Moore graph $G$ of diameter 2 has metric dimension $k \leq \beta(G) \leq 2k-3$ when $k\ge 3,$ and in Theorem~\ref{one} that $G$'s localization number is either $k-1$ or $k$ when $k\ge 5$.
We finish with Theorems~\ref{thm:dim_polarity} and \ref{finalt}, which together provide that if $G$ is a polarity graph of order $q^2+q+1$, where $q$ is a prime power, then $2q-5 \leq \beta(G) \leq 2q-1$ and $(2q-5)/3 \leq \zeta(G) \leq 2q-1$.

Throughout, all graphs considered are simple, undirected, connected, and finite. For a general reference for graph theory, see~\cite{West}. The \emph{closed neighborhood} of $u$, written $N[u],$ consists of a vertex $u$ along with neighbors of $u$. The \emph{second neighborhood} of $u$, written $N_2(u)$, are the vertices of distance 2 to $u$. We refer to vertices in $N_2(u)$ as \emph{second neighbors}. The \emph{distance} between vertices $u$ and $v$ is denoted by $d(u,v).$

\section{Kneser graphs}\label{sec:MetricDim}
A Kneser graph has diameter $2$ if and only if $n \geq 3k$, and we focus on this case. The Kneser graph $K(2,6),$ which is of diameter $2$, is depicted Figure~\ref{ex:Kne}.

\begin{figure}[hb]
\begin{center}
\begin{tikzpicture}[scale=0.7]
\path (0:5) node[circle, draw=black, fill=gray](x12) {12}
(24:5) node[circle, draw=black](x13) {13}
(2*24:5) node(x14)[circle, draw=black] {14}
(3*24:5) node(x15)[circle, draw=black] {15}
(4*24:5) node[circle, draw=black, fill=gray](x16) {16}
(5*24:5) node[circle, draw=black, fill=gray](x23) {23}
(6*24:5) node[circle, draw=black](x24) {24}
(7*24:5) node[circle, draw=black](x25) {25}
(8*24:5) node[circle, draw=black](x26) {26}
(9*24:5) node[circle, draw=black, fill=gray](x34) {34}
(10*24:5) node[circle, draw=black](x35) {35}
(11*24:5) node[circle, draw=black](x36) {36}
(12*24:5) node[circle, draw=black, fill=gray](x45) {45}
(13*24:5) node[circle, draw=black](x46) {46}
(14*24:5) node[circle, draw=black, fill=gray](x56) {56};
\draw (x12) .. controls (12*24:0.5) .. (x34);
\draw (x12) .. controls (12.5*24:0.5) .. (x35);
\draw (x12) .. controls (13*24:1) .. (x36);
\draw (x12) .. controls (13.5*24:1.5) .. (x45);
\draw (x12) .. controls (14*24:2) .. (x46);
\draw (x12) .. controls (14.5*24:3.0) .. (x56);
\draw (x13) .. controls (3.5*24:1) .. (x24);
\draw (x13) .. controls (4*24:0.5) .. (x25);
\draw (x13) .. controls (4.5*24:0) .. (x26);
\draw (x13) .. controls (14*24:1) .. (x45);
\draw (x13) .. controls (14.5*24:1.5) .. (x46);
\draw (x13) .. controls (0*24:2) .. (x56);
\draw (x14) .. controls (3.5*24:1.5) .. (x23);
\draw (x14) .. controls (4.5*24:1) .. (x25);
\draw (x14) .. controls (5*24:0.5) .. (x26);
\draw (x14) .. controls (6*24:0) .. (x35);
\draw (x14) .. controls (14*24:0.5) .. (x36);
\draw (x14) .. controls (0.5*24:1.5) .. (x56);
\draw (x15) .. controls (4*24:2) .. (x23);
\draw (x15) .. controls (4.5*24:1.5) .. (x24);
\draw (x15) .. controls (5.5*24:1) .. (x26);
\draw (x15) .. controls (6*24:0.5) .. (x34);
\draw (x15) .. controls (14.5*24:0) .. (x36);
\draw (x15) .. controls (0.5*24:1) .. (x46);
\draw (x16) .. controls (4.5*24:3) .. (x23);
\draw (x16) .. controls (5*24:2) .. (x24);
\draw (x16) .. controls (5.5*24:1.5) .. (x25);
\draw (x16) .. controls (6.5*24:1) .. (x34);
\draw (x16) .. controls (7*24:0.5) .. (x35);
\draw (x16) .. controls (0.5*24:0) .. (x45);
\draw (x23) .. controls (8.5*24:0) .. (x45);
\draw (x23) .. controls (1.5*24:0) .. (x46);
\draw (x23) .. controls (2*24:0.5) .. (x56);
\draw (x24) .. controls (8*24:1) .. (x35);
\draw (x24) .. controls (8.5*24:0.5) .. (x36);
\draw (x24) .. controls (2.5*24:0) .. (x56);
\draw (x25) .. controls (8*24:2) .. (x34);
\draw (x25) .. controls (9*24:1) .. (x36);
\draw (x25) .. controls (10*24:0.5) .. (x46);
\draw (x26) .. controls (8.5*24:3) .. (x34);
\draw (x26) .. controls (9*24:2) .. (x35);
\draw (x26) .. controls (10*24:1) .. (x45);
\draw (x34) .. controls (11*24:1) .. (x56);
\draw (x35) .. controls (11.5*24:1.5) .. (x46);
\draw (x36) .. controls (11.5*24:3) .. (x45);

\end{tikzpicture}
\caption{The Kneser graph $K(2,6)$\label{ex:Kne} with a resolving set as shaded circles.}
\end{center}
\end{figure}
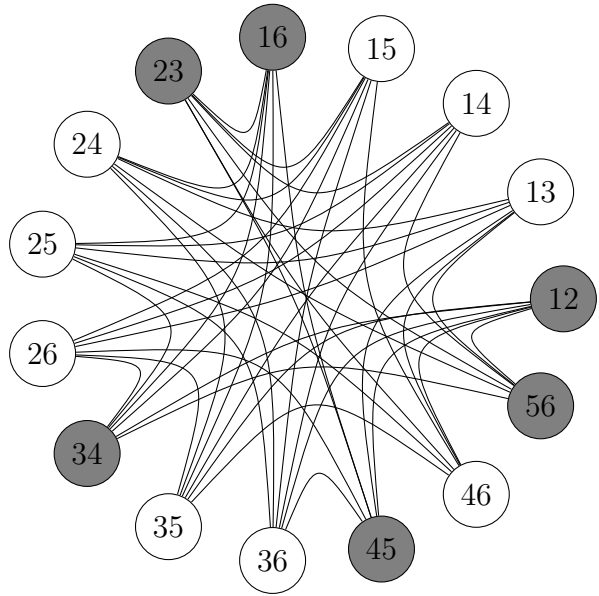

The metric dimension of Kneser graphs has been studied previously \cite{bcggmmp,bc}, and upper bounds were given for a variety of parameter sets.
When $k$ is fixed and $n \geq 3k$, the best asymptotic result of these used a partitioning technique, which yielded an upper bound:
\begin{eqnarray*}
\beta(K(k,n)) &\le & \left\lceil \frac{n}{2k-1} \right\rceil   \left( \binom{2k-1}{k}-1  \right) \hspace{5pt} \\
&\sim& \hspace{5pt} n \frac{2^{2k}}{k\sqrt{\pi k}}.
\end{eqnarray*}
We provide an upper and lower bound that differs from $n/2 + n/k$ by an additive constant for both the localization number and metric dimension of Kneser graphs in infinitely many cases, and represents an improvement for all cases where $n$ is sufficiently large and $k\geq 3$. Our proofs rely on the new notion of hypergraph detection, which we define next.

\subsection{Hypergraph detection}

A \emph{hypergraph} $H=(V,E)$ is a set of vertices $V$ along with a collection $E$ of subsets of $V$.
The elements of $E$ are called \emph{hyperedges}.
We write $V(H)$ and $E(H)$ to represent the vertex set and hyperedge set associated with the hypergraph $H$.
If all hyperedges have cardinality $k$, we say that the hypergraph is \emph{$k$-uniform}.
If each vertex in $V$ is contained in exactly $r$ hyperedges, we say that the hypergraph is \emph{$r$-regular}.
We consider the notions of a cycle and girth originally due to Berge \cite{berge}, in which a cycle of length $\ell$ is defined as a sequence of $\ell$ vertices $(v_1, v_2, \ldots, v_\ell)$ such that $\{v_i,v_{i+1}\}$ occur in some hyperedge for all $1 \leq i \leq  \ell-1$, as do $\{v_1,v_\ell\}$.
The \emph{girth} is the length of the smallest cycle in the hypergraph.
The degree of a vertex $u$, written $d(u)$, is the number of hyperedges that contain $u$.
The neighborhood of $u$, written $N(u)$, is the set of vertices $\{v \mid \{u,v\} \in h \text{ for some } h\in E(H)\}$.

Consider the following hypergraph detection problem.
Let $H=(V,E)$ be a hypergraph with $n$ vertices and $g$ hyperedges with each hyperedge of maximum cardinality $k$.
For any set of $k'$ vertices $B$, where $k' \leq k$, define the \emph{detection vector} $(p_1, p_2, \ldots, p_g)$ by setting $p_i$ to be $0$, $1$, or $k$ if the hyperedge $h_i$ contain zero, at least one (but not $k$), or $k$ vertices in $B$, respectively.
If any two such selections of $B$ will always produce a different detection vector on $H$, then we call the hypergraph \emph{$k'$-detectable}.
We note that if $k' < k$, then $p_i$ will only be $0$ or $1$ for each $i\in[g]$.

When $n \geq 3k$, constructing a $k$-detectable, $k$-uniform hypergraph on $n$ vertices is equivalent to constructing a resolving set for $K(k,n)$.
This is shown from the following two lemmas.
The reason for using this conversion to hypergraphs is twofold: first, it simplifies the discussion significantly, and second, it allows us to apply theorems that were written for hypergraphs more readily.
We first construct a resolving set for $K(k,n)$ from a $k$-detectable, $k$-uniform hypergraph on $n$ vertices.

\begin{lemma} \label{detect2res}
Let $n \geq 3k$.  If $H$ is a $k$-detectable, $k$-uniform hypergraph on $n$ vertices and $g$ hyperedges, then there exists a resolving set for $K(k,n)$ of cardinality $g$.
\end{lemma}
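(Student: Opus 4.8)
The plan is to take the hyperedges of $H$ themselves as the resolving set. Since $H$ is $k$-uniform, each hyperedge $h_i$ is a $k$-subset of $V = [n]$ and hence corresponds to a vertex of $K(k,n)$; let $S = \{h_1, \ldots, h_g\}$ be the resulting set of $g$ vertices. I would then show that $S$ resolves $K(k,n)$ by setting up a dictionary between the distance vector of an arbitrary vertex $B$ (a $k$-subset of $[n]$, so here $k' = k$) relative to $S$ and the detection vector of $B$ relative to $H$.

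First I would work out the three possible distances in $K(k,n)$ from a cop at $h_i$ to a robber at $B$. Since adjacency in $K(k,n)$ is disjointness, $d(h_i, B) = 0$ exactly when $B = h_i$, and $d(h_i, B) = 1$ exactly when $h_i \cap B = \emptyset$. The key point, where the hypothesis $n \geq 3k$ enters, is that any two distinct intersecting $k$-subsets are at distance exactly $2$: if $B \neq h_i$ and $B \cap h_i \neq \emptyset$, then $|B \cup h_i| \leq 2k - 1$, so $n \geq 3k$ leaves at least $k$ elements outside $B \cup h_i$, providing a common neighbor and forcing $d(h_i, B) = 2$.

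Next I would compare these distances to the detection-vector entries. For $|B| = k$ we have $|h_i \cap B| = k$ if and only if $h_i = B$ (as both are $k$-sets), so the entry $p_i = k$ corresponds precisely to $d(h_i, B) = 0$; the entry $p_i = 0$ (empty intersection) corresponds to $d(h_i, B) = 1$; and the entry $p_i = 1$ (proper, nonempty intersection) corresponds to $d(h_i, B) = 2$. Thus the assignment $0 \mapsto 1$, $1 \mapsto 2$, $k \mapsto 0$ is a bijection that converts each detection vector into the corresponding distance vector coordinate-by-coordinate.

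Finally, since this conversion is a bijection applied identically in each coordinate, two $k$-subsets $B$ and $B'$ produce the same distance vector relative to $S$ if and only if they produce the same detection vector relative to $H$. As $H$ is $k$-detectable, distinct $k$-subsets yield distinct detection vectors, hence distinct distance vectors; therefore $S$ is a resolving set of cardinality $g$. The only genuine obstacle is the distance-$2$ verification, which is exactly where $n \geq 3k$ is used; everything else is a bookkeeping translation between the two vector encodings.
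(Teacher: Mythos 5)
Your proof is correct and takes essentially the same approach as the paper's: both use the hyperedges themselves as the resolving set and rest on the dictionary $p_i = 0 \leftrightarrow d(h_i,B)=1$, $p_i = 1 \leftrightarrow d(h_i,B)=2$, $p_i = k \leftrightarrow d(h_i,B)=0$. The only difference is presentational — you argue directly via the coordinate-wise bijection (explicitly checking the distance-$2$ case from $n \geq 3k$), whereas the paper packages the same correspondence as a proof by contradiction.
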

\begin{proof}
We write the hyperedges of $H$ as $E(H) = \{h_1, h_2, \ldots, h_g\}$.
Each hyperedge of $H$ is a $k$-tuple of $[n]$.
We then take $s_i\in V(K(k,n))$ with $s_i = h_i$, for each $i \in [g]$.
We claim that $S = \{s_1, s_2, \ldots, s_g\}$ is a resolving set of $K(k,n)$.
Note that the only difference between $s_i$ and $h_i$ is that we consider $s_i$ to be a vertex of $K(k,n)$ and $h_i$ to be a hyperedge of $H$.

For the sake of contradiction, assume that $S$ is not a resolving set of $K(k,n)$.
There exists vertices $v_1,v_2 \in V(K(k,n))$ such that $d(s,v_1)=d(s,v_2)$ for all $s \in S$.
If $d(s_i,v_1)=0$ for any $s_i \in S$, then $v_1$ and $v_2$ can be distinguished, so we assume $d(s_i,v_1)>0$ and similarly $d(s_i,v_2)>0$.
Consider the two selections of $k$ vertices, $B_1=v_1$ and $B_2=v_2$, in the hypergraph detection problem on $H$.
Note that the only difference between $B_i$ and $v_i$ is that the first is a selection of $k$ vertices in a hypergraph, and the second is a vertex of $K(k,n)$.
We will show that $B_1$ and $B_2$ cannot be distinguished using their detection vectors, and so we will have the required contradiction.

For each $s_i \in S$, if $d(s_i,v_1)=d(s_i,v_2)=1$, then $s_i \cap v_1 = s_i \cap v_2 = \emptyset$.
This implies that the hyperedge $h_i$ does not contain any vertices in common with $B_1$ and $B_2$, and so $p_i$ of the detection vector will return $0$ for both $B_1$ and $B_2$.

For each $s_i \in S$, if $d(s_i,v_1)=d(s_i,v_2)=2$, then $s_i \cap v_1 \neq \emptyset$ and $s_i \cap v_2 \neq \emptyset$. Also note that both $s_i \cap v_1$ and $s_i \cap v_2$ have cardinality strictly less than $k$.
This implies that the hyperedge $h_i$ does contain some vertex (but not $k$ vertices) in common with $B_1$ and $B_2$, and so $p_i=1$ for both $B_1$ and $B_2$.

In both cases, each $p_i$ is the same in both of the detection vectors for $B_1$ and for $B_2$. However, $B_1 \neq B_2$, so the hypergraph is not $k$-detectable, which forms the contradiction.
Therefore, $S$ is a resolving set of $K(k,n)$ of cardinality $g$, and we are done.
\end{proof}

Likewise, we may construct a $k$-detectable, $k$-uniform hypergraph on $n$ vertices from a resolving set for $K(k,n)$.

\begin{lemma} \label{res2detect}
Let $n \geq 3k$. If there exists a resolving set $S$ for $K(k,n)$ of cardinality $g$, then there exists a $k$-detectable, $k$-uniform hypergraph on $n$ vertices and $g$ hyperedges.
\end{lemma}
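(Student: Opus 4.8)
The statement is the exact converse of Lemma~\ref{detect2res}, so the plan is to reverse the same construction. Given a resolving set $S=\{s_1,s_2,\ldots,s_g\}$ for $K(k,n)$, each $s_i$ is a $k$-subset of $[n]$, and I would form the hypergraph $H=([n],\{h_1,\ldots,h_g\})$ by setting $h_i=s_i$, now regarded as a hyperedge rather than a vertex. Since every $s_i$ has cardinality $k$ and $S$ is a set, $H$ is a $k$-uniform hypergraph on $n$ vertices with exactly $g$ hyperedges. It then remains only to verify that $H$ is $k$-detectable.

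The heart of the argument is a dictionary between distances in $K(k,n)$ and entries of the detection vector, which I would record first. Fix a selection $B$ of $k$ vertices of $H$ and let $v=B$ denote the corresponding vertex of $K(k,n)$. Then $d(s_i,v)=0$ exactly when $s_i=v$, that is, when $h_i\subseteq B$, which is precisely the condition $p_i=k$; we have $d(s_i,v)=1$ exactly when $s_i\cap v=\emptyset$, that is, when $h_i$ meets $B$ in no vertex, which is $p_i=0$; and $d(s_i,v)=2$ in the remaining case, where $s_i$ and $v$ share a vertex but are not equal, which is $p_i=1$. The one point needing care is that the third case really does exhaust distance $2$: this is where the hypothesis $n\ge 3k$ enters, since it forces $K(k,n)$ to have diameter $2$, so that any two distinct, non-disjoint $k$-sets are at distance exactly $2$.

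With the dictionary in hand, I would finish by contradiction in the same style as Lemma~\ref{detect2res}. Suppose $H$ is not $k$-detectable, so there are distinct selections $B_1\ne B_2$ with identical detection vectors, and let $v_1,v_2$ be the corresponding vertices of $K(k,n)$. If some entry equals $k$, the dictionary gives $v_1=s_i=v_2$, contradicting $B_1\ne B_2$; hence every entry lies in $\{0,1\}$, and the dictionary translates the equality of the two detection vectors into $d(s_i,v_1)=d(s_i,v_2)$ for all $i$. This says that $v_1$ and $v_2$ are not distinguished by $S$, contradicting that $S$ is a resolving set, and so $H$ is $k$-detectable as required. I expect no serious obstacle here: the work is essentially bookkeeping, and the only subtle step is confirming the distance-to-detection dictionary, in particular the distance-$2$ characterization that relies on $n\ge 3k$.
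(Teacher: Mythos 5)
Your proposal is correct and follows essentially the same route as the paper: the identical hyperedge-for-vertex construction $h_i=s_i$, followed by the same contradiction argument translating equal detection vectors into equal distance vectors, which would violate the resolving property of $S$. If anything, your explicit distance-to-detection dictionary (including the observation that $p_i=k$ for both selections would force $B_1=h_i=B_2$, and that the distance-$2$ case relies on $n\ge 3k$) is stated slightly more carefully than in the paper's own write-up, but the substance is the same.
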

\begin{proof}
We write $S = \{s_1, s_2, \ldots, s_g\}$.
Each element of the resolving set $S$ is a $k$-tuple of $[n]$.
We define a $k$-uniform hypergraph $H$ on vertices $[n]$ by defining the edges of $H$ as $h_i=s_i$ for $i \in [g]$.
We claim that $H$ is $k$-detectable.
The only difference between $s_i$ and $h_i$ is that we consider $s_i$ to be a vertex of $K(k,n)$ and $h_i$ to be a hyperedge of $H$.

For the sake of contradiction, assume that $H$ is not $k$-detectable.
As a result, there exist two sets of vertices $B_1$ and $B_2$ of cardinality $k$ on the hypergraph that cannot be distinguished by a detection vector.
This implies that for each $p_i$ in the detection vector, $p_i=0$ for both $B_1$ and $B_2$, or $p_i=1$ for both $B_1$ and $B_2$. We cannot have $p_i=k$, or else the detection vector instantly distinguishes $B_1$ and $B_2$.
We let $v_1=B_1$ and $v_2=B_2$ be two vertices of $K(k,n)$.
To complete the contradiction, we will show that $v_1$ and $v_2$ are not resolved by $S$.

If $p_i=0$ on both $B_1$ and $B_2$, then $h_i$ has no elements in common with $B_1$ or $B_2$, and as a result $s_i$ has no elements in common with $v_1$ or $v_2$, and so $s_i$ has distance $1$ to $v_1$ and to $v_2$.
If $p_i=1$ on both $B_1$ and $B_2$, then $h_i$ has some elements (but less than $k$ elements) in common with $B_1$ or $B_2$, and as a result $s_i$ has some elements (but less than $k$ elements) in common with $v_1$ or $v_2$, and so $s_i$ has distance $2$ to $v_1$ and to $v_2$.

But then every element of $S$ has the same distance to both $v_1$ and $v_2$ in $K(k,n)$, and so $v_1$ and $v_2$ cannot be resolved by $S$, which forms the contradiction.
Therefore, $H$ is a $k$-detectable, $k$-uniform hypergraph on $n$ vertices and $g$ hyperedges, as required.
\end{proof}

As an example, we take the $6$-cycle considered as a $2$-uniform, $2$-regular hypergraph with vertices $\{ 1,2,3,4,5,6\}$ and edge set $E= \{\{1,2\},\{2,3\},\{3,4\}, \{4,5\}, \{5,6\}, \{1,6\}\}$. The reader may verify that this hypergraph is $2$-detectable.
This edge set $E$ also forms a resolving set of $K(2,6)$, which is the resolving set provided in Figure~\ref{ex:Kne}.

In the following, we provide a lower bound on the number of hyperedges in a $k$-detectable hypergraph with hyperedges of cardinality at most $k$.

\begin{lemma} \label{lm:KnesMajor}
Let $k \geq 3$, $n \geq 3k$, and if $k=3$, then let $n\geq 18$.
If $H$ is a $k$-detectable hypergraph of $n$ vertices with each hyperedge having cardinality at most $k$ such that no hyperedge occurs twice, then $H$ has at least $n/2 +n/k$ hyperedges if $k \neq 4,$ and at least $(3n-1)/4$ if $k=4$.
\end{lemma}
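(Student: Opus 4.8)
The plan is to translate the hypergraph detection condition into a statement about vertex codes and then extract a per-pair separation requirement that can be counted. For a vertex $v$ write $C(v) = \{\, i : v \in h_i \,\}$ for its \emph{code}, and for a $k$-set $B$ write $U(B) = \bigcup_{v \in B} C(v) = \{\, i : h_i \cap B \neq \emptyset \,\}$. A coordinate $p_i$ of the detection vector equals $k$ exactly when $h_i = B$ (so only size-$k$ hyperedges can produce it), and otherwise $p_i$ records merely whether $i \in U(B)$. Consequently a $k$-set $B$ that is itself a size-$k$ hyperedge is automatically separated from every other $k$-set, and $H$ is $k$-detectable if and only if $B \mapsto U(B)$ is injective on the $k$-sets that are not size-$k$ hyperedges. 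This reformulation is what I would establish first, since it removes the awkward threshold at $k$ and reduces everything to distinctness of unions of codes.

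Next I would reduce injectivity to a local, per-pair condition by comparing $k$-sets that differ in a single vertex. Writing $B_1 = S \cup \{u\}$ and $B_2 = S \cup \{v\}$ with $|S| = k-1$ and $u,v \notin S$, one checks $U(B_1) = U(B_2)$ if and only if $C(u) \triangle C(v) \subseteq U(S)$, i.e.\ if and only if every hyperedge meeting exactly one of $u,v$ also meets $S$. Setting $D(u,v)$ to be the set of hyperedges containing exactly one of $u,v$ and $\mathcal{F}_{u,v} = \{\, h \setminus \{u,v\} : h \in D(u,v)\,\}$, the pair $(u,v)$ is separated for a given context $S$ unless $S$ is a transversal of $\mathcal{F}_{u,v}$; and such a bad context can be tolerated only when $S\cup\{u\}$ or $S\cup\{v\}$ completes a size-$k$ hyperedge. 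Since there are only $g$ hyperedges, while the hypotheses $n \ge 3k$ (and $n \ge 18$ when $k=3$) leave room to build many distinct bad contexts, the number of bad contexts that can be patched this way is small. This forces, for almost every pair, that no $(k-1)$-set is a transversal of $\mathcal{F}_{u,v}$; equivalently the transversal number of $\mathcal{F}_{u,v}$ is at least $k$, and in particular $|D(u,v)|$ must be large unless some $h \in D(u,v)$ is contained in $\{u,v\}$, i.e.\ unless $\{u\}$ or $\{v\}$ is a singleton hyperedge. Thus singleton (and more generally small) hyperedges are the efficient resolvers, and I would record the clean structural corollaries: at most one vertex of degree $0$, and every degree-$1$ vertex is essentially forced to carry its own singleton hyperedge.

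The main counting then proceeds by double counting the quantity ``(pair, separating hyperedge)''. On one side, summing the per-pair requirement over all unshielded pairs gives a lower bound of order $k\binom{n}{2}$; on the other side a hyperedge $h$ separates exactly $|h|\,(n-|h|)$ pairs, so $\sum_i |h_i|(n-|h_i|)$ is an upper bound. Combining this with the incidence identity $\sum_i |h_i| = \sum_v d(v)$ and the size bound $|h_i| \le k$ yields $g \ge n/2 - O(1)$, which is the leading term. The delicate part is recovering the exact additive $n/k$ (and the modified value $(3n-1)/4$ when $k=4$): the crude double count is maximized by using only size-$k$ hyperedges, but I expect that configuration to be incompatible with separating all pairs, so the proof must show that resolving the residual ambiguities left behind by large hyperedges forces an extra $\approx n/k$ hyperedges of small size. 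This is the main obstacle, and I anticipate it is handled by a finer, case-based accounting that tracks hyperedge sizes and the shielding singletons simultaneously, with the borderline arithmetic for $k=4$ and the small values of $n$ (hence the $n \ge 18$ hypothesis for $k=3$) treated separately.
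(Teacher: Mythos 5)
Your reformulation via codes $C(v)$ and unions $U(B)$, and the per-pair condition ``every hyperedge meeting exactly one of $u,v$ must also meet $S$ unless the context is patched by a size-$k$ hyperedge,'' are sound and are essentially the same mechanism as the paper's two key properties (the paper perturbs the context set $T$ to avoid containing a full hyperedge, exactly your ``patching'' issue, and concludes $d(u)+d(v)\ge k$ for pairs sharing no hyperedge and $d(u)+d(v)\ge k+2$ for pairs sharing one). The genuine gap is your main counting step, and it is not the minor ``additive'' matter you describe: for fixed $k$ the term $n/k$ is $\Theta(n)$, so the target coefficient is $1/2+1/k$, not $1/2$. Your double count over (pair, separating hyperedge) incidences gives $g\cdot k(n-k)\ge k\binom{n}{2}$, i.e.\ $g\ge n(n-1)/(2(n-k))=n/2+O(1)$, which is short of the claim by a linear amount. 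Worse, this count is intrinsically slack at the extremal configuration: if every hyperedge has size $k$ and every vertex degree $k/2+1$ (which realizes $g=n/2+n/k$), then almost all pairs are non-adjacent and have $|D(u,v)|=k+2$ rather than $k$, so your left-hand side $k\binom{n}{2}$ underestimates $\sum_i|h_i|\bigl(n-|h_i|\bigr)$ by $\Theta(n^2)$. To close that slack you would need the bound $|D(u,v)|\ge k+2$ for (almost all) non-adjacent pairs, which is equivalent to per-vertex degree lower bounds of the form $d(v)\ge k/2+1$ --- information the pairwise requirement alone does not supply (and which can genuinely fail for one vertex, which is where the weaker constant $(3n-1)/4$ for $k=4$ comes from).

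That per-vertex accounting is precisely what the paper's proof consists of, and it is the core of the argument rather than a routine patch. From the two degree-sum properties one gets that at most one vertex has degree below $k/2$, and that all vertices of degree $k/2$ or $(k+1)/2$ are pairwise non-adjacent, so their incident hyperedges are pairwise distinct; the paper then runs a five-case analysis on the minimum degree $\delta$ (namely $\delta<k/2-1$, $\delta=k/2-1$, $\delta=(k-1)/2$, $\delta=k/2$, $\delta=(k+1)/2$), in each case either showing the low-degree vertices already force more than $n/2+n/k$ distinct hyperedges or that $\sum_v d(v)\ge n(k/2+1)$ up to lower-order corrections, and concludes via $g\ge\sum_v d(v)/k$; the case $\delta=k/2-1$ produces the $k=4$ exception, and singleton (``isolated'') hyperedges are removed by a separate reduction at the end, which also explains the $n\ge 18$ hypothesis for $k=3$ in Case 3. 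Your structural corollaries (at most one vertex of degree $0$; degree-$1$ vertices carry singletons) are far weaker than what is needed, and the ``finer, case-based accounting'' you anticipate but do not supply is the actual proof.
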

\begin{proof}
We assume first that $H$ is a $k$-detectable hypergraph of $n$ vertices with no isolated hyperedges such that $H$ has the smallest number of hyperedges possible.
We will deal with the case that $H$ contains isolated hyperedges at the end of the proof. We first prove two properties.

\medskip
\noindent \emph{Property 1:}
If $u,v\in V(H)$ such that $v \notin N(u)$, then $d(u)+d(v) \geq k$.
\medskip

To prove the property, assume for the sake of contradiction that  $d(u)+d(v) < k$.
This will imply that there are two $k$-sets $B_1$ and $B_2$ of vertices that are indistinguishable, contradicting $H$ being $k$-detectable.
We may assume that $d(u)<d(v)$ without loss of generality.
As a result, if $d(u) \geq k/2$ we are done, so we assume $d(u) \leq \lfloor k/2 \rfloor$.
Note that by assumption, $\{u,v\}$ does not occur in a hyperedge of $H$.

We select a set of $k-1$ vertices $T \subseteq V(H) \setminus \{u,v\}$ that contains at least one vertex from each hyperedge in $I_u$ and $I_v$, where $I_u$ is the set of hyperedges incident to vertex $u$.
This is always possible, as first we have assumed $d(u)+d(v) < k$ so $T$ has enough vertices to cover each hyperedge incident with $u$ and $v$, and second that there are at least $k-1$ vertices in $V(H) \setminus \{u,v\}$ since $n-2 \geq 3k-2 \geq k-1$, meaning that there are enough unique vertices to define $T$.
Note that while $T$ contains one vertex from each edge incident to $u$ and $v$, $T$ may also contain other vertices of $V(H)\setminus \{u,v\}$, even if they are not incident to $u$ or $v$.

We also select $T$ such that there is no hyperedge $h\in E(H)$ of cardinality $k$ such that $h \subseteq T\cup\{u,v\}$.
To see this is possible, suppose we have chosen $T$ containing some hyperedge $h_1\in E(H)$ of cardinality $k$ such that $h_1 \subseteq T\cup\{u,v\}$.
If we remove any vertex from $T$, then $h_1$ will still intersect $T$ as $h_1$ intersects $T$ in $k-1$ vertices.
Also, there are $k-2$ hyperedges in $(I_u\cup I_v) \setminus \{h_1\}$ that intersect $T$.
Therefore, some vertex $x_1$ can be removed from $T$ such that $T\setminus\{x_1\}$ intersects all hyperedges in $I_u\cup I_v$.
We can then add a vertex $x_2\in V(H) \setminus \{u,v,x_1\}$ to $T \setminus \{x_1\}$ to form $T'$.
If some hyperedge $h_2\in E(H)\setminus \{h_1\}$ of cardinality $k$ has $h_2 \subseteq T'\cup\{u,v\}$, then we can instead add some other vertex $x_3\in V(H) \setminus \{u,v,x_1,x_2\}$, and so on.
There can be at most $k-1$ points $\{x_1, x_2, \ldots, x_{k-1}\}$ constructed in this way, as there are at most $k-1$ hyperedges in $I_u \cup I_v$ by our assumption.
As $n\geq 3k$, there are $n-2k \geq k$ vertices in $V(H) \setminus \{u,v\}$ that are not in $T$ or in $\{x_1, x_2, \ldots, x_{k-1}\}$.
Adding one of these vertices to $T\setminus \{x_1\}$ then results in the set of vertices required.
As a result, we can assume that $T$ was chosen such that there is no hyperedge $h\in E(H)$ of cardinality $k$ such that $h \subseteq T\cup\{u,v\}$.

We may choose either the $k$ vertices in $B_1 = T \cup \{u\}$ or the $k$ vertices in $B_2 = T \cup \{v\}$. In both $B_1$ and $B_2$, the value of $p_i$ in the detection vector will be the same for each $i$, in that all hyperedges $h_i$ that contain a vertex in $T\cup\{u,v\}$ will yield $p_i=1$ (no $p_i=k$ due to the construction of $T$) and that all hyperedges $h_i$ that do not contain a vertex in $T\cup\{u,v\}$ will yield $p_i=0$, independent of whether $u$ or $v$ is added to $T$.
This contradicts the assumption that $H$ is $k$-detectable.
Thus, the claim that $d(u)+d(v) \geq k$ must be true and Property 1 holds.

\medskip
\noindent \emph{Property 2:}
If $u,v\in V(H)$ with $v \in N(u)$, then $d(u)+d(v) \geq k+2$.
\medskip

To prove Property 2, assume for the sake of contradiction that $d(u)+d(v) < k+2$.
Say that $\{u,v\}$ occurs in some hyperedge $h$.
There may be many such hyperedges, and we can pick any one except when $\{u,v\}$ is a hyperedge in $E(H)$, in which case we must have $h=\{u,v\}$.

Select a set of $k-1$ vertices $T \subseteq V(H)\setminus \{u,v\}$ that contains at least one vertex from each hyperedge in $I_u\setminus \{h\}$ and in $I_v \setminus\{h\}$.
This is always possible for two reasons.
First, because we have assumed $d(u)-1+d(v)-1 < k,$ and so there are enough vertices in $T$ to intersect each of these hyperedges.
Second, there are at least $n-k > k-1$ vertices $V(H)$ that are not contained in $h$; thus, there are a sufficient number of vertices in $V(H)\setminus h$ from which to choose $T.$

Let $E'$ be the set of hyperedges that intersect $u$ or $v$, but not both.
Note that $|E'| \leq d(u)-1 +d(v)-1 \leq k-1.$
We also select $T$ such that there is no hyperedge $h'\in E'$ of cardinality $k$ such that $h' \subseteq T\cup\{u,v\}$.
To see this is possible, suppose we have chosen $T$ containing hyperedge $h_1\in E'$ of cardinality $k$ such that $h_1 \subseteq T\cup\{u,v\}$.
If we remove any vertex from $T$, then $h_1$ will still intersect $T$ as $h_1$ intersects $T$ in $k-1$ vertices.
Also, there are at most $k-2$ hyperedges in $(I_u\cup I_v) \setminus \{h,h_1\}$, each of which must intersect $T$.
Therefore, some vertex $x_1$ can be removed from $T$ such that $T\setminus\{x_1\}$ intersects all hyperedges in $(I_u\cup I_v)\setminus h$.
We can then add a vertex $x_2\in V(H) \setminus \{u,v,x_1\}$ to $T \setminus \{x_1\}$ to form $T'$.
If some hyperedge $h_2\in E'\setminus \{h_1\}$ of cardinality $k$ has $h_2 \subseteq T'\cup\{u,v\}$, then we can instead add some other vertex  $x_3\in V(H) \setminus \{u,v,x_1,x_2\}$, and so on. There can be at most $|E'|$ points $\{x_1, x_2, \ldots, x_{|E'|}\}$ constructed in this way, as there are at most $|E'|$ hyperedges in $E'$.
As $n\geq 3k$ and $|E'|\leq k-1$, there are $n-2-(k-1)-|E'| \geq n-2k \geq k$ vertex in $V(H) \setminus \{u,v\}$ that are not in $T$ or in $\{x_1, x_2, \ldots, x_{|E'|}\}$.
Adding one of these vertices to $T\setminus \{x_1\}$ then results in the set of vertices required.
As a consequence, we can assume that $T$ was originally chosen such that there is no hyperedge $h\in E'$ of cardinality $k$ such that $h \subseteq T\cup\{u,v\}$.

Let $B_1=T \cup \{u\}$ and $B_2=T \cup \{v\}$.
In either case, each $p_i$ is identical in the detection vectors for both $B_1$ and $B_2$, in that all hyperedges $h_i$ that contain a vertex in $T\cup\{u,v\}$ yield $p_i=1$ ($p_i\neq k$ due to the construction of $T$) and that all hyperedges $h_i$ that do not contain a vertex in $T\cup\{u,v\}$ yield $p_i=0$, independent of whether $u$ or $v$ is added to $T$.
This contradicts the assumption that $H$ is $k$-detectable.
Thus, the claim that $d(u)+d(v) \geq k+2$ must be true and Property 2 holds.
\smallskip

Let $u$ be a vertex of minimum degree $\delta$ and let $S$ be the set of vertices of degree $k/2$ or $(k+1)/2$. We consider five cases, and in each, show that the number of hyperedges is at least $n/2+n/k$ if $k \neq 4$ and at least $(3n-1)/4$ if $k=4$.
For ease of notation, we define $m=n/2+n/k$, and so $(3n-1)/4 = m - 1/4.$

\medskip
\noindent \emph{Case 1:} $\delta < k/2-1$. In this case, there is one vertex that obtains the minimum degree of at most $k/2-3/2$, and the remaining $n-1$ vertices have degree at least $k/2 + 3/2$, by Properties 1 and 2. The number of hyperedges can be calculated as
\begin{align*}
\sum_{u\in V(H)} \frac{d(u)}{k}   &\geq 	\frac{\delta + (n-1)(k/2+3/2)}{k} \\
&\geq (n-1)\left(\frac{1}{2}+\frac{3}{2k}\right)\\
&= \frac{n}{2}+\frac{n}{k} + \frac{n}{2k} - \frac{1}{2} - \frac{3}{2k}\\
&\geq m.
\end{align*}
Note that the inequality holds for all $k\ge 3.$
\medskip

\noindent \emph{Case 2:} $\delta = k/2-1$. We then have that $S$ is empty by Properties~1 and 2 as all vertices $V(H)\setminus\{u\}$ will have degree at least $k/2+1$.
Let $u$ be the unique vertex of degree $\delta$, and let $S'$ denote the vertices of degree exactly $k/2+1$. Note that the vertices in $S'$ cannot be neighbors of $u$, by Property~$2$.
As there are no repeated hyperedges, there must be at least $\lfloor \log_2(\delta)+1 \rfloor$ vertices in the hyperedges from $u$.
We say that $S''$ is the set of vertices of degree at least $k/2 +2$, so $|S''|\geq \lfloor \log_2(\delta)+1 \rfloor$.
We have that $|S'| = n-1-|S''|$. The number of hyperedges may be calculated as
\begin{align*}
\sum_{u\in V(H)} \frac{d(u)}{k}
&\geq 	\frac{(k/2-1) +|S'|(k/2+1) + |S''|(k/2+2)}{k} \\
&= \frac{n}{2}+\frac{n}{k} - \frac{2}{k} + \frac{|S''|}{k}  \\
&\geq \frac{n}{2} + \frac{n}{k} + \frac{\lfloor \log_2(\delta)\rfloor-1}{k}.
\end{align*}
When $k \geq 6$, this value is greater than or equal to $m$. Otherwise, $k=4$ since $\delta=k/2-1$ implies that $k$ is even, and the number of hyperedges is greater than or equal to $m-1/4$.

\medskip
\noindent \emph{Case 3:} $\delta = (k-1)/2$. In this case, $S$ is potentially non-empty. We investigate the cardinality of $S$. By Property~2, any pair of vertices in $S\cup \{u\}$ cannot occur together in a hyperedge.
There are at least $(k+1)/2$ hyperedges incident with each vertex in $S$ (as $k$ is odd in this case) and at least $(k-1)/2$ hyperedges incident with $u$.
Each of these hyperedges are unique, giving $|S|(k+1)/2  + (k-1)/2$ hyperedges that intersect $S\cup \{u\}$.
We assume $|S| \leq n/(k+1) + 2n/(k(k+1))-(k-1)/(k+1)$,  
or else the number of hyperedges that intersect $S\cup \{u\}$ is larger than $n/2 + n/k$, 
and so we would be finished.
As each vertex not in $S \cup \{u\}$ has degree at least $(k+1)/2+1$, we have the number of hyperedges as
\begin{align*}
\sum_{u\in V(H)} \frac{d(u)}{k}
&\geq \frac{\delta + |S|(k+1)/2 + (n-|S| -1)((k+1)/2+1)}{k} \\
&= \frac{n}{2} + \frac{n}{k}+ \frac{n}{2k} -\frac{2}{k}-\frac{|S|}{k} \\
&\geq \frac{n}{2} + \frac{n}{k} +\frac{n}{2k}-\frac{2}{k} - \left(\frac{n}{k(k+1)} + \frac{2n}{k^2(k+1)}-\frac{k-1}{k(k+1)}\right).
\end{align*}
By direct calculation, it may be verified that this last value is greater than or equal to $m$ when $n \geq (2k^2 + 6k)/(k^2-k-4)$, which is always true for $k\geq 4$ and $n\geq 3k$, and is true for $k=3$ when $n \geq 18$.

\medskip
\noindent \emph{Case 4:} $\delta = k/2$.
We then have that $S$ is non-empty.
In this case, we also investigate the set $S'$ of vertices of degree $k/2+1$.
The number of hyperedges in $H$ is then
\begin{align*}
\sum_{u\in V(H)} \frac{d(u)}{k}  &\geq \frac{k}{2} |S| + \left(\frac{k}{2}+1\right)|S'| + \left(\frac{k}{2}+2\right)(n-|S|-|S'|) \\
&= \frac{n}{2} + \frac{n}{k}+ \frac{n}{k} - \frac{2|S|}{k} - \frac{|S'|}{k}.
\end{align*}
We are done, unless $n/k - 2|S|/k - |S'|/k<0$.
This can only be the case if $|S'| > n-2|S|$.
As such, we suppose that this is the case, and we will show that the number of hyperedges is larger than $m$.
We note that any hyperedge containing a vertex in $S$ cannot contain a vertex in $S$ or $S'$ by Property~2.
There are $k|S|/2$ hyperedges that contain a vertex in $S$.
There are at least a further $(k/2+1)|S'|/k = (1/2+1/k)|S'|$ hyperedges that contain a vertex in $S'$.
This implies that there are at least
\begin{align*}\frac{k}{2} |S| + \left(\frac{1}{2}+\frac{1}{k}\right)|S'| &> \frac{k}{2} |S| + \left(\frac{1}{2}+\frac{1}{k}\right)(n-2|S|) \\
&= \frac{n}{2}+\frac{n}{k}  +|S|\left(\frac{k}{2}-2\left(\frac{1}{2}+\frac{1}{k}\right)\right)\\
&\geq m
\end{align*} hyperedges in $H$, where this last inequality holds as $\delta=k/2$ implies $k$ is even and so $k\geq 4$ in this case.

\medskip
\noindent \emph{Case 5:} $\delta = (k+1)/2$. We then have that $S$  is non-empty. We investigate the cardinality of $S$. By Property~2, any pair of vertices in $S$ cannot occur together in a hyperedge.
There are at least $(k+1)/2$ hyperedges incident with each vertex in $S$. Each of these hyperedges are unique, giving $(k+1)|S|/2$ hyperedges that intersect $S$.
We assume $|S| \leq n/(k+1) + 2n/(k(k+1))$,  
or else the number of hyperedges that intersect $S$ is larger than $n/2 + n/k$,
and so we would be finished.
As each vertex not in $S$ has degree at least $(k+3)/2$, we have the number of hyperedges as
\begin{align*}
\sum_{u\in V(H)} \frac{d(u)}{k}
&\geq \frac{|S|(k+1)/2 + (n-|S|)(k+3)/2}{k} \\
&= 	\frac{n}{2} + \frac{n}{k} + \frac{n}{2k} -\frac{|S|}{k} \\
&\geq \frac{n}{2} + \frac{n}{k} + \frac{n}{2k} -\frac{n}{k(k+1)}-\frac{2n}{k^2(k+1)}\\
&\geq  m,
\end{align*}
where the last inequality follows as $k \geq 3$.
\smallskip

Hence, in all cases with no isolated hyperedges, we have that the hypergraph has the desired number of hyperedges.  Now consider the final case, where $H$ contains an isolated hyperedge, say $\{u\}$.
Form the hypergraph $H'$ by removing $u$ from all hyperedges in $H$ except for hyperedge $\{u\}$.
Note that $H$ and $H'$ contain the same number of vertices and hyperedges.
If $H$ is $k$-detectable, then so is $H'$.
However, if $H'$ is $k$-detectable, $H$ may not be.
It is then always beneficial to restrict to hypergraphs where any vertex that is contained in an isolated hyperedge will have degree $1$.
Suppose $H'$ contains $p$ isolated hyperedges.
We construct a hypergraph $H''$ by removing these $p$ isolated hyperedges from $H'$.
Observe that $H''$ has $n-p$ vertices and no isolated hyperedges, and as our work above shows, $H''$ has at least $(n-p)/2 +(n-p)/k$ hyperedges if $k \neq 4$ and at least $(n-p)/2 + (n-p)/4-1/4$ if $k=4$.
We can then conclude that   $H'$ has at least
\[\frac{n-p}{2} +\frac{n-p}{k} + p \geq \frac{n}{2} +\frac{n}{k}\] hyperedges if $k \neq 4$ and at least $(n-p)/2 +(n-p)/4-1/4+p \geq n/2 +n/4-1/4$ if $k=4$.
As a result, the proof is complete.
\end{proof}

An immediate consequence of Lemmas~\ref{detect2res}, \ref{res2detect}, and \ref{lm:KnesMajor} is the following.
\begin{corollary}\label{cordimd}
If $n \geq 3k$ and $k \geq 3$, then $\beta(K(k,n)) \geq n/2+ n/k.$ If $k = 4$, then $\beta(K(4,n)) \geq (3n-1)/4$.
\end{corollary}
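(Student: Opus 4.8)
The plan is to read off the corollary directly from Lemmas~\ref{res2detect} and \ref{lm:KnesMajor}; since the result is advertised as an immediate consequence, essentially all the work lies in checking that the hypotheses of the major lemma are met by the hypergraph the conversion lemma hands us. Note that for this lower bound only the ``resolving set $\to$ hypergraph'' direction is needed: Lemma~\ref{detect2res} (the converse conversion) is what one uses for upper bounds, so it plays no role here.

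First I would fix a minimum resolving set $S$ of $K(k,n)$, so that $|S| = \beta(K(k,n))$. Applying Lemma~\ref{res2detect} to $S$ produces a $k$-detectable, $k$-uniform hypergraph $H$ on $n$ vertices with exactly $\beta(K(k,n))$ hyperedges. Next I would verify the hypotheses of Lemma~\ref{lm:KnesMajor}. Every hyperedge of $H$ has cardinality exactly $k$, hence at most $k$, as required. Since the elements of a resolving set are distinct vertices of $K(k,n)$, that is, distinct $k$-subsets of $[n]$, the corresponding hyperedges of $H$ are pairwise distinct, so no hyperedge of $H$ occurs twice. Together with $k \geq 3$ and $n \geq 3k$, this confirms all the hypotheses.

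Lemma~\ref{lm:KnesMajor} then gives that $H$ has at least $n/2 + n/k$ hyperedges when $k \neq 4$, and at least $(3n-1)/4$ hyperedges when $k = 4$. Because the number of hyperedges of $H$ equals $\beta(K(k,n))$, both stated bounds follow at once, and the proof is complete.

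I do not expect a genuine obstacle, but the one point requiring care is the boundary case $k=3$: Lemma~\ref{lm:KnesMajor} is only established under the extra assumption $n \geq 18$, whereas the corollary is phrased for all $n \geq 3k = 9$. To be fully rigorous I would either restrict the $k=3$ part of the statement to $n \geq 18$, or else dispose of the finitely many remaining values $9 \leq n \leq 17$ by a separate (e.g.\ computational or ad hoc) argument; apart from this, the argument is a one-line composition of the preceding lemmas.
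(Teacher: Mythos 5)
Your proposal is correct and follows exactly the paper's route: the paper offers no written proof at all, asserting the corollary as an immediate consequence of Lemmas~\ref{detect2res}, \ref{res2detect}, and \ref{lm:KnesMajor}, and your argument is precisely the verification that this composition works (indeed, as you note, only Lemma~\ref{res2detect} and Lemma~\ref{lm:KnesMajor} are actually used for the lower bound). Your flag on the $k=3$ boundary is a genuine catch rather than excess caution: Lemma~\ref{lm:KnesMajor} is proved only for $n \geq 18$ when $k=3$ (the inequality in Case~3 of its proof requires this), so the corollary as stated in the paper silently drops a hypothesis, and your suggestion to restrict to $n \geq 18$ or treat $9 \leq n \leq 17$ separately is exactly what a careful version of the statement needs.
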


Perhaps surprisingly, we may use Lemma~\ref{lm:KnesMajor} to provide a lower bound on the localization number of Kneser graphs.

\begin{lemma}\label{lemmlocc}
If $n \geq 3k$ and $k \geq 3$, then
\begin{equation*}
\zeta(K(k,n)) \geq \frac{n}{2}+\frac{n}{k}-\frac{k}{2} -1.
\end{equation*}
 If $k = 4$, then
\begin{equation*}\zeta(K(4,n)) \geq \frac{3n-13}{4}.
\end{equation*}
\end{lemma}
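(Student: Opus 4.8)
The plan is to exhibit an evasion strategy for the robber showing that $c$ cops cannot localize the robber whenever $c < m - k/2 - 1$, where $m = n/2 + n/k$; this gives $\zeta(K(k,n)) \ge m - k/2 - 1$, and the $k=4$ bound follows by the same computation using the $k=4$ case of Lemma~\ref{lm:KnesMajor}. Throughout I would track the cops' \emph{possible set}, the collection of vertices consistent with every distance vector received so far together with a legal robber trajectory; the robber wins so long as this set never shrinks to a single vertex immediately after a cop probe. Recall that a cop on the $k$-set $s$ reports distance $0$, $1$, or $2$ to a robber on the $k$-set $w$ according to whether $s=w$, $s\cap w=\emptyset$, or $s\cap w\neq\emptyset$ with $s\neq w$. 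Thus a cop configuration acts exactly as the hypergraph of Lemma~\ref{lm:KnesMajor}, and two $k$-sets share a distance vector precisely when they are indistinguishable in the detection sense.

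The key step is to reduce each round after the first to a detection problem on a ground set of size $n-k$. First I would run the opening round: since $c < m$, the configuration $\mathcal{C}_1$, viewed as a hypergraph on $[n]$, is not $k$-detectable by Lemma~\ref{lm:KnesMajor}, so there is a pair $v_1,v_2$ of $k$-sets with a common distance vector, and after the robber's move the possible set contains $N[v_1]$, that is, every $k$-set disjoint from $v_1$. I then maintain the invariant that before each cop probe the possible set contains all $k$-sets disjoint from some fixed candidate $v$. Given the next configuration $\mathcal{C}$, I restrict attention to robber positions inside $U=[n]\setminus v$; a cop $s$ separates two $k$-subsets $w_1,w_2$ of $U$ exactly according to how $\bar s:=s\cap U$ meets them, so $\mathcal{C}$ induces a hypergraph on $U$ with at most $c$ hyperedges, each of cardinality at most $k$. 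The crucial arithmetic is that $(n-k)/2+(n-k)/k = m - k/2 - 1$, and $(3(n-k)-1)/4 = (3n-13)/4$ when $k=4$. Hence whenever $c < m - k/2 - 1$ this hypergraph falls below the threshold of Lemma~\ref{lm:KnesMajor}, so it is not $k$-detectable and there are distinct $k$-sets $w_1,w_2\subseteq U$ with a common distance vector. Both are disjoint from $v$, hence lie in $N[v]$ and are reachable, so the robber moves to one of them, keeps the possible set of size at least two, and re-establishes the invariant with $v$ replaced by $w_1$.

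Iterating this forever shows the robber evades $c$ cops, proving the bound. I expect the main obstacles to be bookkeeping points rather than the core idea. First, one must verify the trajectory and consistency details: that $w_1,w_2$ are genuinely reachable from the earlier candidate pair (they are, being disjoint from $v$), and that neither equals a cop (forced, since such a coincidence would split their common distance vector via a $0$-entry), so that the detection values $p_i\in\{0,1,k\}$ on $U$ match the Kneser distances and no $0$ ever appears. Second, and more delicate, one must ensure Lemma~\ref{lm:KnesMajor} actually applies on the reduced ground set $U$: this needs $n-k\ge 3k$, and $n-k\ge 18$ when $k=3$, so the range $3k\le n<4k$ requires separate treatment, for instance by enlarging the reachable set to $N[v_1]\cup N[v_2]$ or by checking the claimed bound directly. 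After discarding empty hyperedges (those $s\subseteq v$) and duplicated traces $s\cap U$, the surviving hyperedges are distinct and of size at most $k$, so the hypotheses of Lemma~\ref{lm:KnesMajor} hold and its contrapositive delivers the required indistinguishable pair.
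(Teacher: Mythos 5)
Your proposal is correct and follows essentially the same route as the paper: the paper's proof likewise passes to the trace hypergraph $\{s_i \setminus u\}$ on the $n-k$ ground elements of $[n]\setminus u$, identifies equality of Kneser distance vectors with equality of detection vectors, and applies Lemma~\ref{lm:KnesMajor} with $n-k$ substituted for $n$ (the paper argues from the final round of a winning cop strategy, while you maintain an evasion invariant forever, but these are the same argument read in opposite directions). Incidentally, the hypothesis issue you flag --- that applying Lemma~\ref{lm:KnesMajor} on the reduced ground set requires $n-k\ge 3k$, and $n-k\ge 18$ when $k=3$ --- is glossed over in the paper's own proof as well, so your write-up is, if anything, the more careful of the two on this point.
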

\begin{proof}
If we have $g=\zeta(K(k,n))$ cops, then there is some time during play when the robber moves from a vertex $u$ to a vertex in $N(u)$, and is then captured by the cops.
An observation to make here is that $N(u)$ contains all the vertices of $K(k,n)$ that contain no elements of $u$. (Recall that $u$ is a $k$-tuple of $[n]$.)
As a result, the vertices in $N(u)$ will be labeled by each $k$-tuple of $[n] \setminus u$, and the induced subgraph of $K(k,n)$ on the vertex set $N(u)$, which we write as $K(k,n) \mid_{N(u)}$ for the rest of the proof, is isomorphic to $K(k,n-k)$.

Suppose that during the final play, the cops were on vertices $X=\{s_1, s_2, \ldots, s_g\}$.
Define the hyperedges $h_i = s_i \setminus u$, and consider the set $X' = \{h_1, h_2, \ldots, h_g\}$.
Observe that $X'$ can be considered as a set of hyperedges with cardinality at most $k$ from a hypergraph $H$ on the $n-k$ vertices $V(K(k,n)) \setminus u$.
We remove any repeated hyperedges from $H$ and adjust $g$ accordingly.
We now show that $H$ is $k$-detectable.

For the sake of contradiction, assume that $H$ is not $k$-detectable.
There exists two $k$-sets $B_1$ and $B_2$ of vertices (selected from $V(H) = V(K(k,n)) \setminus u$) on the hypergraph with identical detection vectors.
This implies that each hyperedge $h_i$ will yield $p_i=0$ for both $B_1$ and $B_2$, or $p_i=1$ for both $B_1$ and $B_2$.
Note that a hyperedge $h_i$ will not yield $p_i=k$, or else $B_1$ and $B_2$ are instantly distinguishable.
We let $v_1=B_1$ and $v_2=B_2$ be two vertices of $K(k,n)\mid_{N(u)}$.
To complete the contradiction, we will show that $v_1$ and $v_2$ are not resolved by $S$.

If hyperedge $h_i$ yields $p_i=0$ for both $B_1$ and $B_2$, then $h_i$ has no elements in common with $B_1$ or $B_2$, and as a result $s_i$ has no elements in common with $v_1$ or $v_2$, and so $s_i$ has distance $1$ to $v_1$ and to $v_2$.
If hyperedge $h_i$ yields $p_i=1$ for both $B_1$ and $B_2$, then $h_i$ has some elements (but not $k$ elements) in common with $B_1$ and with $B_2$, and as a result $s_i$ has some elements (but not $k$ elements) in common with $v_1$ and $v_2$, and so $s_i$ has distance $2$ to $v_1$ and to $v_2$.

But then every element of $X$ has the same distance to $v_1$ and $v_2$, and so $v_1$ and $v_2$ cannot be resolved by $S$, which forms the contradiction.
Therefore, we have that $H$ must be $k$-detectable.
We then have that $H$ is a $k$-detectable hypergraph with each hyperedge having cardinality at most $k$ such that no hyperedge occurs twice.
The result then follows from  Lemma~\ref{lm:KnesMajor} by substituting $n-k$ for $n.$
\end{proof}

This completes our discussion of the lower bounds for both the metric dimension and localization number for Kneser graphs. Next, we consider an upper bound of the metric dimension for the Kneser graphs. Upper bounds are given on the minimum number of hyperedges required in a $k'$-detectable, $k$-uniform hypergraph, obtained by showing that any $k$-uniform hypergraph with a sufficiently large minimum degree and girth implies the hypergraph is $k'$-detectable. We then utilize an example of a $k$-uniform hypergraph with a sufficiently large minimum degree and girth.

\begin{lemma}
If $H$ is a $k$-uniform hypergraph with minimum degree at least $k'/2+1$ and girth at least $5$, where $k' \leq k$, then $H$ is $k'$-detectable.
\end{lemma}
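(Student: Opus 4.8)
The plan is to argue by contradiction and to reduce $k'$-detectability to an injectivity statement about which hyperedges are met. First I would observe that two selections $B_1,B_2$ of $k'$ vertices have the same detection vector if and only if they meet exactly the same hyperedges: the entry $p_i$ distinguishes $0$ from nonzero, so equal vectors force the same ``hit set'', and the distinction between the values $k$ and $1$ plays no further role, so the argument will automatically cover the case $k'=k$ as well. So suppose $B_1\neq B_2$ meet the same hyperedges, and set $A=B_1\setminus B_2$, $C=B_2\setminus B_1$, $D=B_1\cap B_2$, with $a=|A|=|C|\geq 1$. The core observation is that every $x\in A$ forces many neighbours into $B_2$: each of its $d(x)\geq k'/2+1$ hyperedges meets $B_1$ through $x$, hence must meet $B_2$, and since girth at least $5$ makes $H$ \emph{linear} (distinct hyperedges share at most one vertex), these representatives lie in distinct hyperedges and are therefore distinct vertices of $B_2$. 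Thus $|N(x)\cap B_2|\geq k'/2+1$ for every $x\in A$, and symmetrically $|N(y)\cap B_1|\geq k'/2+1$ for every $y\in C$.

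Next I would extract the two structural consequences of girth at least $5$ that drive the contradiction. The first is a \emph{codegree bound}: two non-adjacent vertices have at most one common neighbour, since two common neighbours would close either a Berge $4$-cycle or, after collapsing a coincident hyperedge, a Berge $3$-cycle. The second is a \emph{confinement} property: the common neighbours of an adjacent pair all lie in their unique shared hyperedge, as otherwise a stray common neighbour closes a Berge $3$-cycle. Together with the absence of Berge triangles these also give that any pairwise-adjacent set of vertices lies inside a single hyperedge. I expect the codegree bound to be the main obstacle: it is exactly the place where girth $5$ (rather than merely linearity or girth $4$) is needed, and it requires a short but careful case analysis according to whether the hyperedges realising the adjacencies coincide.

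With these tools the counting is then forced. Since each $x\in A$ has more than half of $B_2$ as neighbours, any two vertices of $A$ share at least two common neighbours in $B_2$; the codegree bound then makes them adjacent, so $A$ is pairwise adjacent and hence lies in one hyperedge $e_A$ (and likewise $C$ lies in one hyperedge). Now each $x\in A$ still has at least $k'/2$ neighbours in $B_2$ coming from its hyperedges other than $e_A$, and these neighbours lie outside $e_A$; by confinement the corresponding sets for distinct $x,x'\in A$ are disjoint. As $e_A$ meets $B_2$, this yields $a\cdot(k'/2)\leq |B_2\setminus e_A|\leq k'-1$, forcing $a\leq 1$. Finally I would dispatch the remaining case $a=1$, writing $A=\{x\}$ and $C=\{y\}$: if $x\not\sim y$ then all of their guaranteed neighbours fall into $D$, a set of size $k'-1$, producing at least three common neighbours and contradicting the codegree bound; if $x\sim y$ via a hyperedge $e$, then the at least $k'/2$ neighbours of $x$ and the at least $k'/2$ neighbours of $y$ lying outside $e$ are disjoint by confinement yet must fit inside $(B_1\cup B_2)\setminus e$, giving $k'\leq k'-1$. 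Either way we contradict $B_1\neq B_2$, so $H$ is $k'$-detectable. Throughout, the threshold $k'/2+1$ is used tightly, so I would double-check each strict inequality.
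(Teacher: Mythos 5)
Your proof is correct, but it takes a genuinely different route from the paper's. The paper proves detectability as a \emph{decoding} argument: from the set $E'$ of hit hyperedges it defines $S$ to be the set of vertices all of whose incident hyperedges lie in $E'$, shows that $S$ contains $B$ with at most one extra vertex (two extra vertices would force at least $k'+1$ distinct vertices of $B$ at distance $1$ from one of them), and then identifies the possible spurious vertex as the unique $v\in S$ each of whose hyperedges contains another vertex of $S$; this is constructive in the spirit of the game, since it tells the cops exactly how to recover $B$ from the probe. You instead prove injectivity by contradiction on the symmetric difference $A=B_1\setminus B_2$, $C=B_2\setminus B_1$, after first isolating three structural consequences of girth $5$ (linearity, the codegree bound for non-adjacent pairs, and confinement of the common neighbours of an adjacent pair to their unique shared hyperedge), then forcing $|A|\le 1$ by the disjointness count inside $B_2\setminus e_A$, and finally killing $|A|=|C|=1$ by playing $x$ against $y$ across the two sets. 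Your version is longer, but it is careful precisely where the paper is glossy: the paper's assertion that ``there can only be one vertex of $B$ that is distance $1$ from both $v_1$ and $v_2$'' is immediate only when $v_1$ and $v_2$ are non-adjacent; if they share a hyperedge, that hyperedge may contain several vertices of $B$, and one must instead count the $B$-neighbours coming from the remaining hyperedges (which your confinement lemma shows are disjoint and avoid the shared edge) to reach the same contradiction. Your adjacent/non-adjacent case split handles exactly this configuration, and your opening reduction (equal vectors force equal hit sets, so the $p_i=k$ value is irrelevant) cleanly covers $k'=k$ and $k'<k$ uniformly. So: same underlying girth-$5$ counting germ, but a different architecture, with the paper buying brevity and an explicit cop strategy, and your argument buying completeness in the adjacency cases.
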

\begin{proof}
Suppose that $B$ is a selection of a set of $k'$ vertices.
Let  $E'$ be the subset of hyperedges in $E=H(E)$ containing a vertex of $B$.
That is, $E'$ contains those hyperedges $h_i$ that yield $p_i=1$.
Note that if $v \in B$, then every hyperedge in $E$ that intersects $v$ must be in $E'$.
Said in another way, if some vertex $v$ occurs in less than $d(v)$ hyperedges of $E'$, then it implies that $v \notin B$.
Define $S$ as the collection of vertices $v$ such that every hyperedge in $E$ that intersects $v$ is in $E'$.

We note that $B \subseteq S$. However, we assert that either $S = B$ or $S=B \cup \{v_1\}$ for some vertex $v_1$.
To see this, suppose for the sake of contradiction that a second vertex $v_2\notin B \cup \{v_1\}$ is in $S$.
Each hyperedge adjacent to $v_1$ must contain a vertex of $B$, and so at least $k'/2+1$ vertices in $B$ have distance $1$ from $v_1$.
Similarly,  at least $k'/2+1$ vertices in $B$ have distance $1$ from $v_2$.
As the hypergraph's girth is at least $5$, there can only be one vertex of $B$ that is distance $1$ from both $v_1$ and $v_2$.
But then there are at least $k'+1$ distinct vertices in $B$ of distance $1$ from either $v_1$ or $v_2$, which gives the contradiction.

Now we must show that given $S$, the cop player may deduce $B$.
If $S$ has cardinality $k'$, then the cop player can immediately deduce that $B=S$.
Otherwise, suppose that $S$ has cardinality $k'+1$.
There is precisely one vertex $v$ in $S$ such that every hyperedge that contains $v$ also contains another vertex of $S$.
To see that there is at least one such vertex, note that the one vertex in $S \setminus B$ must have this property.
To see that there cannot be two such vertices, suppose for the sake of contradiction that there is another vertex $v'\in S$ with this property.
As the girth of the hypergraph is at least $5$, there can only be one vertex of $B$ that is distance $1$ from both $v$ and $v'$.
But then there are at least $k'+1$ vertices in $B$ of distance $1$ from either $v$ or $v'$, which gives the contradiction.
As a result, the cops identify that $B=S\setminus\{v\}$.
\end{proof}

Note that if the minimum degree is large, then the hypergraph is $k'$-detectable for many values of $k'$.

\begin{corollary} \label{cor:all_num_cops}
If $H$ is a $k$-uniform hypergraph with minimum degree at least $k/2+1$ and girth at least $5$, then $H$ is $k'$-detectable for all $k' \leq k$.
\end{corollary}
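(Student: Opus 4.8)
The plan is to derive this immediately from the preceding lemma by a simple monotonicity observation on the minimum-degree hypothesis. The lemma guarantees $k'$-detectability whenever the minimum degree is at least $k'/2+1$ and the girth is at least $5$; the corollary asserts $k'$-detectability for \emph{all} $k'\le k$ simultaneously under the single hypothesis that the minimum degree is at least $k/2+1$. The only thing to check is that this one hypothesis is strong enough to recover the lemma's hypothesis for every individual value of $k'\le k$.

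First I would fix an arbitrary integer $k'$ with $k'\le k$ and observe that the function $t\mapsto t/2+1$ is increasing, so that $k'/2+1\le k/2+1$. Consequently, if $H$ has minimum degree at least $k/2+1$, then $H$ also has minimum degree at least $k'/2+1$. Since the girth requirement in the lemma (girth at least $5$) is independent of $k'$, the full hypothesis of the lemma is met for this choice of $k'$.

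Next I would invoke the lemma directly: because $H$ is $k$-uniform with minimum degree at least $k'/2+1$ and girth at least $5$, the lemma yields that $H$ is $k'$-detectable. As $k'$ was an arbitrary integer satisfying $k'\le k$, I would conclude that $H$ is $k'$-detectable for all $k'\le k$, which is exactly the claim.

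I do not expect any genuine obstacle here, as this is a direct corollary. The only subtlety worth flagging is the \emph{direction} of the monotonicity: one must note that the minimum-degree threshold $k'/2+1$ \emph{decreases} as $k'$ decreases, so the largest case $k'=k$ imposes the strongest degree requirement and hence dominates all smaller $k'$. This is why a single uniform bound of $k/2+1$ suffices to cover the entire range $k'\le k$ at once.
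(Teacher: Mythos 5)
Your proposal is correct and matches the paper's intent exactly: the paper states this as an immediate corollary of the preceding lemma (giving no separate proof), relying on precisely the monotonicity observation you make, namely that minimum degree at least $k/2+1$ implies minimum degree at least $k'/2+1$ for every $k'\le k$, so the lemma applies to each such $k'$. Your flagging of the direction of monotonicity is a sensible touch, and there is nothing missing.
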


In the following lemma, we provide a hypergraph with the properties required by the antecedent of Corollary \ref{cor:all_num_cops}, and hence we can construct a resolving set of $K(k,n)$, giving an upper bound on the metric dimension of the Kneser graphs.
The existence of a $k$-uniform, $\lceil k/2+1 \rceil$-regular hypergraph with girth $5$ was shown in \cite{el}.
We label such a hypergraph as $H(k,5)$.
This construction was probabilistic, and so the number of vertices is not explicitly known.
Let $m$ be the number of vertices in $H(k,5)$.
With $w=k^{\lceil k/2+1 \rceil}$, we have that $m$ is bounded above by $w  2^{w(w-1)/2}$.

\begin{lemma}\label{lemdimd}
For $k\geq 4$ a fixed even integer,
\begin{align*}
\beta(K(k,n)) &\leq \left(\frac{1}{2}+\frac{1}{k}\right)n+ \left(\frac{1}{2}+\frac{1}{k}\right)m \left \lceil \frac{n'}{m} \right \rceil \\&= \left(\frac{1}{2}+\frac{1}{k}\right)n +O(1),\end{align*}
and for $k \geq 3$ a fixed odd integer,
\begin{align*} \beta(K(k,n)) &\leq \left(\frac{1}{2}+\frac{1}{k}\right)n+ \left(\frac{1}{2k}\right)n +\left(\frac{1}{2}+\frac{1}{k}+\frac{1}{2k}\right)m \left  \lceil \frac{n'}{m} \right \rceil\\
&= \left(\frac{1}{2}+\frac{1}{k}+\frac{1}{2k}\right)n +O(1),\end{align*}
where $n'$ is the smallest non-negative integer $n'\equiv n \pmod{m}$, and $m$ is the number of vertices in $H(k,5)$.
\end{lemma}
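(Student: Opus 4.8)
The plan is to reduce the upper bound to a hypergraph construction and then count hyperedges. By Corollary~\ref{cor:all_num_cops}, any $k$-uniform hypergraph with minimum degree at least $\ceil{k/2+1}$ and girth at least $5$ is $k$-detectable, and by Lemma~\ref{detect2res} a $k$-detectable $k$-uniform hypergraph on $n$ vertices with $g$ hyperedges yields a resolving set of $K(k,n)$ of size $g$. Hence it suffices to build, on \emph{exactly} $n$ vertices, a $k$-uniform hypergraph of girth at least $5$ and minimum degree at least $\ceil{k/2+1}$, and to bound its number of hyperedges. The building block is the hypergraph $H(k,5)$ on $m$ vertices: being $k$-uniform and $\ceil{k/2+1}$-regular, a double count of incidences shows it has exactly $c_k m$ hyperedges, where $c_k = \ceil{k/2+1}/k$; this equals $1/2+1/k$ for even $k$ and $1/2+1/k+1/(2k)$ for odd $k$, matching the coefficients in the statement.

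Write $n = qm + n'$ with $q = \floor{n/m}$ and $0 \le n' < m$. When $n'=0$, I would simply take $q$ pairwise disjoint copies of $H(k,5)$. A disjoint union is still $k$-uniform, has the same minimum degree, and has girth at least $5$ (no Berge cycle can cross between components), so Corollary~\ref{cor:all_num_cops} applies; it has exactly $n$ vertices and $c_k q m = c_k n$ hyperedges, which is the claimed bound since the second term vanishes. The remaining case $n'>0$ is the heart of the argument, since I must land on exactly $n$ vertices while $m \nmid n$. Here I would take $q$ disjoint copies of $H(k,5)$ together with one additional copy $E \cong H(k,5)$, but realize the $m$ vertex-slots of $E$ using $n'$ fresh vertices and $m-n'$ vertices \emph{borrowed} from the $q$ existing copies. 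This uses exactly $qm+n'=n$ distinct vertices. The hypergraph stays $k$-uniform; each fresh vertex has degree $\ceil{k/2+1}$ and each borrowed vertex has even larger degree, so the minimum-degree condition holds. Counting $q+1$ copies worth of hyperedges gives $c_k(q+1)m = c_k(n-n'+m) \le c_k(n+m)$, which is exactly the stated bound when $n'>0$; the odd case is identical with $c_k=(k+3)/(2k)$.

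The main obstacle is ensuring that borrowing does not destroy the girth, and my plan is to borrow \emph{at most one vertex from each existing copy}. With this choice I would argue that every Berge cycle is confined to a single copy (or to $E$): once a cycle passes through a vertex internal to some copy $C_i$, it can only continue through hyperedges of $C_i$, and since $C_i$ meets the rest of the hypergraph solely at its unique borrowed vertex, the cycle cannot leave $C_i$ without repeating that vertex; hence it remains inside $C_i$, where the girth is $5$. The same one-vertex-per-copy condition also prevents an $E$-hyperedge and a $C_i$-hyperedge from sharing two vertices, ruling out Berge $2$-cycles, and it guarantees that the hyperedges produced are genuinely distinct (a borrowed-only $E$-hyperedge spans several copies and so cannot coincide with any single-copy hyperedge).

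This borrowing scheme requires at least $m-n'$ available copies, that is $q \ge m-n'$, which holds once $n$ is large enough that $q \ge m$; since $m$ depends only on $k$, the finitely many smaller values of $n$ contribute only to the $O(1)$ term and can be absorbed using a crude resolving set. I expect the girth verification above, together with confirming the count and distinctness bookkeeping, to be the most delicate part of the write-up, while the asymptotic conclusion $\beta(K(k,n)) = c_k n + O(1)$ then follows immediately because $m$ and hence $c_k m \ceil{n'/m}$ are constants for fixed $k$.
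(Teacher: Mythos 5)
Your construction is sound, but it takes a genuinely different route from the paper's, and the difference is precisely in how the remainder $n'\not\equiv 0 \pmod{m}$ is handled. Both arguments use copies of $H(k,5)$, invoke Corollary~\ref{cor:all_num_cops} together with Lemma~\ref{detect2res}, and count hyperedges by the same degree sum $m\lceil k/2+1\rceil/k$ per copy, so the coefficients match. The paper, however, never insists that the union hypergraph have girth $5$: it covers $[n]$ by $r=\lceil n/m\rceil$ parts of size exactly $m$, allowing the last part to \emph{overlap} earlier ones, places a copy of $H(k,5)$ on each part, and then argues detection part-by-part --- each $H_{P_i}$ is $k'$-detectable for every $k'\le k$, so $B\cap P_i$ is recovered inside each part and $B=\bigcup_i (B\cap P_i)$. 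Short Berge cycles through overlap vertices are irrelevant because the corollary is applied to the parts, not to the union. You instead keep the \emph{whole} hypergraph girth-$5$ by borrowing at most one vertex per existing copy for the final copy, so that Corollary~\ref{cor:all_num_cops} applies once, globally; your cycle-confinement argument is correct (any Berge cycle meeting an internal vertex of a copy $C_i$ can only leave $C_i$ through its unique borrowed vertex, hence stays inside $C_i$), as are the degree, distinctness, and counting checks. What each approach buys: yours uses the detection corollary exactly in the form it is stated (one girth-$5$ hypergraph, one set of size exactly $k$), sidestepping the paper's part-wise step, which implicitly requires recovering intersections $B\cap P_i$ of unknown and varying cardinality; the paper's overlap trick, in exchange, works for every $n\ge m$, with no lower bound of order $m^2$.

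That last point is the one real shortfall to flag: your borrowing scheme needs $q=\lfloor n/m\rfloor \ge m-n'$, roughly $n\ge m^2$, and for the finitely many $n$ with $m\le n < m(m-1)+1$ your fallback ``crude resolving set'' salvages only the asymptotic form $\beta(K(k,n))=c_k n+O(1)$, not the explicit displayed inequality, which the paper's overlapping cover does establish on that range. Since the lemma's downstream uses (Theorem~\ref{finall}, and the exact values when $m\mid n$) rely only on the asymptotic statement and the $n'=0$ case, none of the applications are undermined; but if you want the displayed inequality on the full range, replace the borrowing device by the overlap trick whenever $q<m-n'$.
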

\begin{proof}
For any set of $m$ vertices $V$, we may place a $H(k,5)$, and we label such a hypergraph as $H_V$.
Consider the partition of $V(H)=[n]$ into parts $P_1, P_2, \ldots, P_r$ of cardinality $m$, where we allow the last part to cover some elements of a previous part (so strictly speaking, this is a cover).
Define the hypergraph $H = \bigcup_{i=1}^r H_{P_i}$.
The subset of vertices $V(H_{P_i})$ along with corresponding hyperedges $E(H_{P_i})$ form a $k$-uniform hypergraph with minimum degree $\lceil k/2+1 \rceil$, and due to Corollary \ref{cor:all_num_cops}, this hypergraph is $k'$-detectable for all $k' \leq k$, implying that the set $B\cap P_i$ can be accurately detected.
As a result, $B$ can be accurately detected, since $B=\bigcup_{i=1}^r (B \cap P_i)$.
As a consequence, $H$ can be used to form a resolving set of $K(k,n)$ by Lemma~\ref{detect2res}.
By calculating the number of hyperedges in $H$ as $\sum_v d(v) /k = rm\lceil k/2+1 \rceil/k$, we have the upper bound given in the statement of the lemma.
\end{proof}

We summarize these results as follows. Note that in the next theorem, the upper bound follows as $\zeta(G) \leq \beta(G)$ for all graphs $G$.

\begin{theorem}\label{finall}
For the localization number and metric dimension of Kneser graphs, we have the following.
\begin{enumerate}
\item For a fixed even integer $k \geq 4$ and $n$ with $n \geq 3k$, we have that
\[\zeta(K(k,n)) =  \frac{n}{2} + \frac{n}{k} +O(1)\]
and \[ \beta(K(k,n)) = \frac{n}{2} + \frac{n}{k} +O(1).\]
\item For a fixed odd integer $k \geq 3$ and $n$ with $n \geq 3k$, we have that
\[\frac{n}{2}+\frac{n}{k} -\frac{k}{2}-1 \leq \zeta(K(k,n)) \leq   \frac{n}{2} + \frac{n}{k}+ \frac{n}{2k} +O(1)\]
and \[ \frac{n}{2}+\frac{n}{k} \leq \beta(K(k,n)) \leq  \frac{n}{2} + \frac{n}{k} + \frac{n}{2k}+O(1).\]
\end{enumerate}
\end{theorem}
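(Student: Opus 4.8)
The plan is to assemble Theorem~\ref{finall} directly from the lemmas already established, so the work is almost entirely bookkeeping rather than new argument. The theorem is a summary statement, and the key observation (recorded in the remark preceding it) is that $\zeta(G) \le \beta(G)$ holds for every graph $G$: any resolving set lets the cops win in a single round, so the localization number cannot exceed the metric dimension. This single inequality supplies every upper bound on $\zeta$ for free from the corresponding upper bound on $\beta$.

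For part (1), the even case, I would proceed as follows. The lower bound $\zeta(K(k,n)) \ge n/2 + n/k - k/2 - 1$ comes from Lemma~\ref{lemmlocc}, and since $k$ is fixed, the terms $-k/2 - 1$ are absorbed into $O(1)$, giving $\zeta(K(k,n)) \ge n/2 + n/k + O(1)$. The matching upper bound on $\beta$ comes from Lemma~\ref{lemdimd}, whose even-$k$ estimate is $\left(\tfrac12 + \tfrac1k\right)n + O(1) = n/2 + n/k + O(1)$ (here $m$ is fixed once $k$ is fixed, so $\left(\tfrac12+\tfrac1k\right)m\lceil n'/m\rceil$ is $O(1)$). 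Chaining $\zeta \le \beta$ then sandwiches both parameters between two quantities that agree up to an additive constant, yielding $\zeta(K(k,n)) = n/2 + n/k + O(1)$ and $\beta(K(k,n)) = n/2 + n/k + O(1)$. For the metric dimension lower bound I would also invoke Corollary~\ref{cordimd}, which gives $\beta(K(k,n)) \ge n/2 + n/k$ directly; this is what pins the lower end of the $\beta$ sandwich.

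For part (2), the odd case, the bounds no longer match: the upper bound from Lemma~\ref{lemdimd} carries an extra $\tfrac{n}{2k}$ term, reflecting the weaker control in the odd-$k$ hypergraph construction. I would simply cite Lemma~\ref{lemmlocc} for $\zeta(K(k,n)) \ge n/2 + n/k - k/2 - 1$, Corollary~\ref{cordimd} for $\beta(K(k,n)) \ge n/2 + n/k$, and Lemma~\ref{lemdimd} for the common upper bound $n/2 + n/k + n/(2k) + O(1)$; the $\zeta$ upper bound again follows from $\zeta \le \beta$. The displayed inequalities in the statement are then exactly these four facts transcribed, with fixed-$k$ constants folded into $O(1)$.

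Since every ingredient is already proved, there is no genuine obstacle; the only care needed is verifying that the $k=4$ special cases in Corollary~\ref{cordimd} and Lemma~\ref{lm:KnesMajor} (the $(3n-1)/4$ bounds) are consistent with the cleaner $n/2 + n/k$ form quoted here, which they are, since $(3n-1)/4 = n/2 + n/4 - 1/4 = n/2 + n/k - 1/4$ differs from $n/2 + n/k$ only by the $O(1)$ constant $-1/4$. Thus the $k=4$ instance folds seamlessly into the even case of part (1), and the proof reduces to a one-line citation of the four preceding results together with the inequality $\zeta(G) \le \beta(G)$.
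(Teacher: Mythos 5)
Your proposal is correct and matches the paper's own treatment exactly: Theorem~\ref{finall} is a summary statement assembled from Corollary~\ref{cordimd}, Lemma~\ref{lemmlocc}, and Lemma~\ref{lemdimd}, with the upper bounds on $\zeta$ obtained via the inequality $\zeta(G) \leq \beta(G)$, just as you describe. Your check that the $k=4$ bounds $(3n-1)/4$ and $(3n-13)/4$ fold into the $O(1)$ term is a worthwhile verification that the paper leaves implicit.
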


We note that for fixed even $k\geq 6$, Corollary~\ref{cordimd} and Lemma~\ref{lemdimd} give that $\beta(K(k,n)) = n/2+n/k$ for an infinite number of values of $n$. We conjecture that this equality holds for all $k \geq 3$ and all $n$ sufficiently large.

\section{Graphs of diameter $2$ with no 4-cycle}\label{secc4}

As referenced in the introduction, there are three subclasses of graphs that have diameter $2$ and contain no $4$-cycle: graphs with maximum degree $n-1$, the Moore graphs, and the polarity graphs~\cite{bef}. In this section, we determine the localization number
of the Moore graphs of diameter $2$ and give bounds on the metric dimension. We conclude by giving bounds on the metric dimension and the localization number of polarity graphs.

\subsection{Moore graphs of diameter $2$} \label{sec:moore}

A \emph{Moore graph} is a graph of diameter $d$ and girth $2d+1$. We give bounds on the localization number for Moore graphs with diameter $2$ that differ by $1$.
Note these are girth $5$ graphs, which are $k$-regular and with $k^2 + 1$ vertices for some positive integer $k$. The known Moore graphs of diameter $2$ are the $5$-cycle, the Petersen graph, and the Hoffman-Singleton graph. The Hoffman-Singleton graph is $7$-regular with order $50$ and $175$ edges; see Figure~\ref{fig: Hoffman-Singleton Graph}.
The only remaining possible Moore graph of diameter $2$ is a hypothetical one that is $57$-regular and order 3,250. For a survey of the theory of Moore graphs, see~\cite{MS}.
\begin{figure}[h!]
\includegraphics[width=5cm]{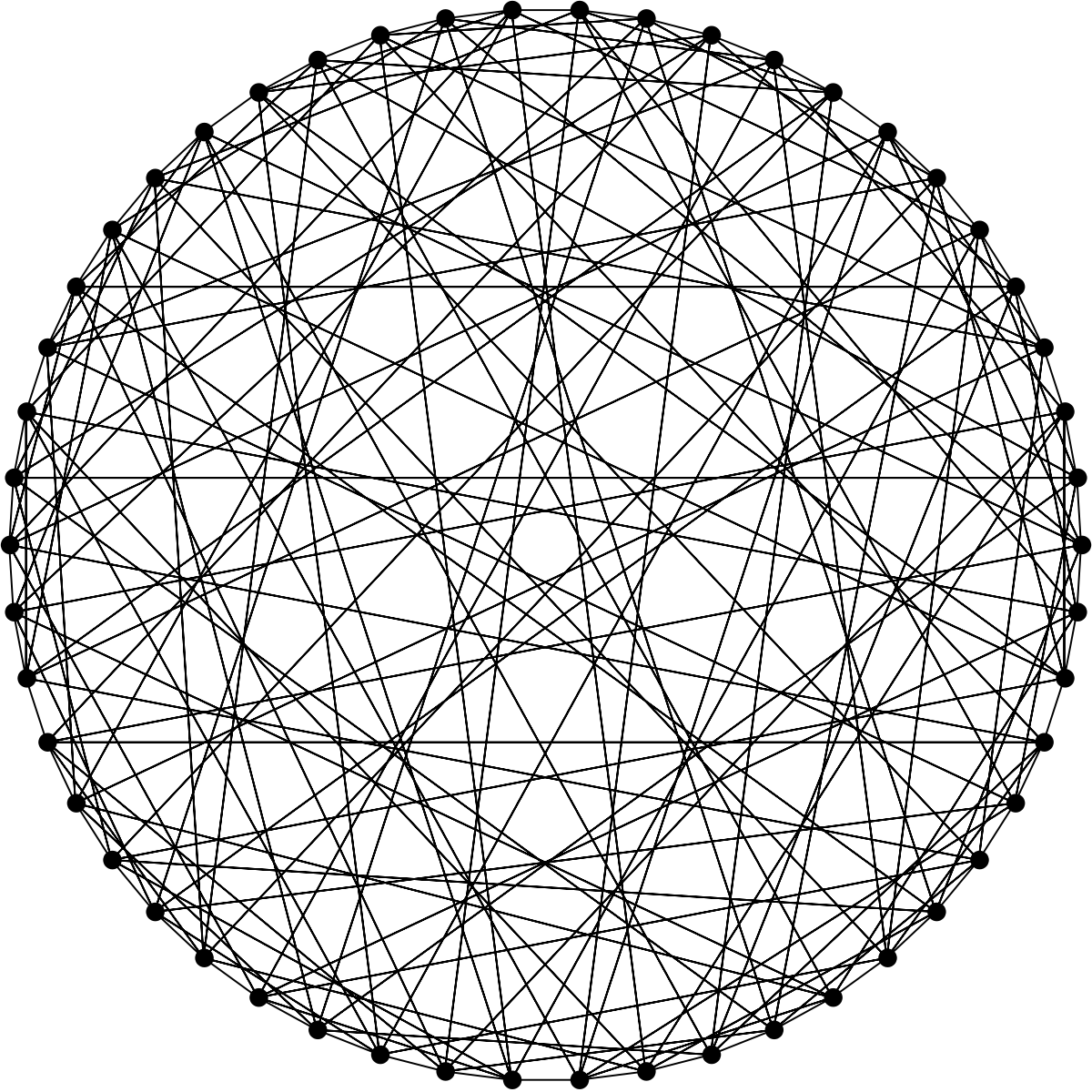}
\caption{The Hoffman-Singleton graph.}\label{fig: Hoffman-Singleton Graph}
\end{figure}

We begin by analyzing the metric dimension of the Moore graphs.
It is straightforward to see that $\beta (C_5) = 2$. We provide bounds for the metric dimension of the larger Moore graphs as follows.

\begin{theorem} \label{thm:MooreMD}
For $k \geq 3$, a Moore graph $G$ of diameter $2$ and girth $5$ that is $k$-regular has
\[
k \leq \beta(G) \leq 2k-3.
\]
\end{theorem}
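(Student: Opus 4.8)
The plan rests on translating metric dimension into a combinatorial condition on neighbourhoods. Fix the two structural facts that define these graphs: since $G$ is $k$-regular of girth $5$ and diameter $2$, any two adjacent vertices have no common neighbour and any two non-adjacent vertices have exactly one; equivalently, any two distinct vertices share at most one common neighbour. For a candidate set $S$ and a vertex $w \notin S$, call $T_w = N(w) \cap S$ its \emph{trace}; since $G$ has diameter $2$, the distance vector of $w$ to $S$ is determined by $T_w$, so $S$ resolves $G$ if and only if the traces of the vertices outside $S$ are pairwise distinct. The crucial consequence of the shared-neighbour bound is that two distinct vertices cannot have the same trace of size at least $2$ (such a trace would exhibit two common neighbours of its two elements). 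Hence the only possible collisions are among vertices of \emph{empty} trace and among vertices whose trace is a common \emph{singleton}. I would record this as the working criterion: $S$ is resolving if and only if at most one vertex outside $S$ has no neighbour in $S$, and for each $s \in S$ at most one vertex outside $S$ has $s$ as its unique neighbour in $S$ (a \emph{private neighbour}).

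For the lower bound $\beta(G) \ge k$, I would argue by counting trace sizes. Let $n_j$ be the number of vertices outside a resolving set $S$ with $|T_w| = j$, and write $t = |S|$. Distinctness gives $n_0 \le 1$ and $n_1 \le t$, while the shared-neighbour bound, which says each pair of vertices of $S$ lies in at most one trace, gives $\sum_{j \ge 2} n_j \binom{j}{2} \le \binom{t}{2}$ and hence $\sum_{j \ge 2} n_j \le \binom{t}{2}$. Since $\sum_j n_j = k^2 + 1 - t$, assuming $t \le k-1$ forces $k^2 - 2k + 2 \le \sum_{j\ge 2} n_j \le \binom{k-1}{2}$, which simplifies to $k^2 - k + 2 \le 0$, a contradiction. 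I expect the subtlety here to be an off-by-one: a purely local argument (the traces of the $k$ neighbours of a fixed vertex are pairwise disjoint, since no vertex of $S$ other than their common neighbour can meet two of them) only yields $k-1$, so the global pair-counting above is what secures the sharp bound $k$.

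For the upper bound $\beta(G) \le 2k-3$, the plan is to build coordinates around an edge and then exhibit an explicit resolving set. Fix an edge $uv$, write $N(u) = \{v, a_1, \dots, a_{k-1}\}$ and $N(v) = \{u, b_1, \dots, b_{k-1}\}$. Using the shared-neighbour facts one checks that $u$ is the unique common neighbour of $v$ and each $a_i$, so there are no edges between the $a_i$ and the $b_j$, and that the remaining $(k-1)^2$ vertices are in bijection with $[k-1] \times [k-1]$: each outer vertex $w_{ij}$ is adjacent to exactly one $a_i$ and exactly one $b_j$, and each pair $(a_i,b_j)$ has a unique common neighbour $w_{ij}$. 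I would then take $S = \{a_1, \dots, a_{k-1}, b_1, \dots, b_{k-2}\}$, of size $2k-3$, and verify the criterion: $u$ and $v$ have traces of size at least $2$ (handling $k=3$ separately, where $v$ is the unique private neighbour of $b_1$); the outer vertices $w_{ij}$ with $j \le k-2$ have the distinct size-$2$ traces $\{a_i,b_j\}$; each $a_i$ has the single private neighbour $w_{i,k-1}$; and the only vertex with empty trace is $b_{k-1}$. This meets both clauses of the criterion, so $S$ resolves $G$.

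I expect the main obstacle to be the upper bound's structural setup rather than any single estimate: one must establish the grid/coordinate description of $G$ around an edge carefully (it is exactly here that girth $5$ and diameter $2$ are both used) and then track the traces and private neighbours without error, including the small case $k = 3$ where several traces degenerate to singletons. Once the trace criterion and the coordinate structure are in place, the verification for the chosen set $S$ is routine bookkeeping, and the lower bound reduces to the short counting inequality above.
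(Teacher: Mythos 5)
Your proposal is correct, and it splits into a half that matches the paper and a half that is genuinely different. For the upper bound you construct exactly the paper's resolving set: with your $b_{k-1}$ playing the role of the paper's vertex $w$, your $S=\{a_1,\dots,a_{k-1},b_1,\dots,b_{k-2}\}$ is precisely $(N(u)\cup N(v))\setminus\{u,v,w\}$. The difference is only presentational: you verify resolution through the trace criterion (traces of size at least $2$ are automatically distinct because two vertices of $S$ have at most one common neighbour, so only empty and singleton traces can collide), whereas the paper runs a direct case analysis of distance vectors; your bookkeeping of the private neighbours $w_{i,k-1}$, the unique empty-trace vertex $b_{k-1}$, and the degenerate $k=3$ case all check out. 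The lower bound, however, is a different argument from the paper's. The paper argues locally around one vertex $u$: each cop is within distance $1$ of at most one vertex of $N(u)$, and all but one vertex of $N(u)$ must be so covered --- an argument which, exactly as you anticipate, most readily yields $k-1$ and needs an additional global step (the uncovered neighbour of $u$ must still be separated from $u$ itself, which forces a domination-type condition) to reach $k$; the paper passes over this point quickly. Your global double count --- $n_0\le 1$, $n_1\le t$, and $\sum_{j\ge 2} n_j\le \binom{t}{2}$ since each pair of $S$-vertices lies in at most one trace --- closes that gap cleanly: with $t\le k-1$ it forces $k^2-k+2\le 0$, an outright contradiction, with no case analysis and no appeal to the game formulation. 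What the paper's local formulation buys in exchange is reusability in the game setting: essentially the same ``each cop distinguishes at most one neighbour'' argument reappears as the lower bound for the localization number in Theorem~\ref{one}, where a one-round global count of the kind you use is not available because the robber's neighbourhood changes from round to round.
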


\begin{proof}
We frame the metric dimension in terms of the localization game where the cops must capture the robber on their first move.
For the lower bound, suppose that some vertex $u$ contains a robber.
All but one of the vertices in $N(u)$ must be distance $0$ or $1$ from any fixed cop, or else there are two vertices in $N(u)$ that have the same distance to all cops.
However, any other cop can have distance $0$ or $1$ to at most one vertex in $N(u)$.
As a result, we need an additional $k-1$ cops, and so require at least $k$ cops in total.

For the upper bound, we play with $2k-3$ cops, and show that the robber can be caught on the first play.
Let $u$ be any vertex in $G$, $v \in N(u)$, and $w \in N(v)\setminus \{u\}$.
The cops place themselves on the $2k-3$ vertices in $(N(u) \cup N(v)) \setminus \{u,v,w\}$.
If the robber is on $u$, then there are at least $2$ cops on vertices in $N(u)$ that probe a distance of $1$, and so the robber is found.
If the robber is on a vertex of $N(u)\setminus \{v\}$, then a cop probes a distance of $0$, and the robber is found.
If the robber is on $w$, then all the cops probe a distance of $2$, and it is straightforward to see that no other vertex has distance $2$ to each cop.
If the robber is on $N(v)\setminus \{u,w\}$, then a cop probes a distance of $0$, and the robber is found.
If the robber is on the second neighborhood of $u$ but not on $N(v)$, then exactly one cop $C_1$ in $N(u)$ probes one.
If in addition to this, there is a cop $C_2$ on $N(v)$ that probes a distance of $1$, then the robber is found on the single vertex in $N(C_1) \cap N(C_2).$ If there is no cop on $N(v)$ that probes a distance of $1$, then the robber is found on $N(C_1) \cap N(w).$
If any cop in $N(v)$ probes a distance of $1$ and none of the cops in $N(u)$ probe a distance of $1$, then the robber is located on $v$.
This is a complete case analysis, that shows that each vertex can be resolved by the cops immediately, so we are done.
\end{proof}

In the case that $k=3$, which is the Petersen graph, this gives the exact value. Note that Theorem~\ref{thm:MooreMD} gives that the Petersen graph has metric dimension $3$.

It is straightforward to see that $\zeta (C_5) = 2$. The localization number of the Petersen graph is $3$, as derived in the next theorem.
\begin{theorem}
The localization number of the Petersen graph is $3$.
\end{theorem}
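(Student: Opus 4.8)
The plan is to establish the two bounds separately. For the lower bound $\zeta(G) \geq 3$ where $G$ is the Petersen graph, I would first observe that $G$ is $3$-regular of girth $5$ and diameter $2$, so distance probes return only values in $\{0,1,2\}$. With a single cop the graph is trivially not localizable (any vertex-transitive graph of diameter $2$ that is not complete fails for one cop), so the real content is ruling out two cops. The argument I would use is a robber-strategy argument: I exhibit, for every placement of two cops, two distinct vertices with the same distance vector that the robber can maintain as candidate locations. The key structural fact is that two cops probing a given vertex $u$ yield a distance vector, and because each cop at distance $1$ from the robber has only $k=3$ neighbors, a pair of cops can pin down very few candidate vertices at a time; combined with vertex-transitivity and the abundance of non-adjacent vertex pairs at distance $2$, the robber can always keep at least two indistinguishable vertices alive across rounds. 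I would make this precise by tracking the set of robber-consistent vertices and showing it never drops to size one.

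For the upper bound $\zeta(G) \leq 3$, I would give an explicit three-cop strategy and argue it captures the robber in a bounded number of rounds. The cleanest route is to exploit the Petersen graph's structure as the Kneser graph $K(2,5)$: its vertices are the $2$-subsets of $[5]$, adjacent when disjoint, and distances are determined by intersection size, exactly as in the hypergraph-detection framework developed earlier in the paper. One natural first move is to place the three cops so that their probe vector substantially narrows the robber's location, then use the robber's forced move into a neighborhood (a copy of $K(2,3)$, which is a triangle/small structure) to finish. I would enumerate the possible consistent-location sets after the first probe, show each has small bounded size, and then show that after the robber moves the cops can reposition to resolve the residual ambiguity.

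The main obstacle I expect is the lower bound, specifically the careful verification that two cops never suffice. Because the Petersen graph is small and highly symmetric, one is tempted to argue by symmetry, but the cops may reposition adaptively each round, so I must show the robber survives against \emph{every} adaptive two-cop strategy, not merely every static placement. The honest way to do this is to maintain an invariant on the set of vertices consistent with all probes so far and prove it has size at least $2$ after every cop move; the automorphism group acting transitively (indeed, the Petersen graph is distance-transitive) lets me reduce the number of essentially distinct cop configurations to a manageable list, but checking that the robber can always step to a vertex keeping two live candidates is the delicate part. A secondary, lesser obstacle is confirming $\zeta(G) \neq 2$ rather than merely $\zeta(G) \leq 3$; since $\beta(G) = 3$ was just established in Theorem~\ref{thm:MooreMD} and $\zeta(G) \leq \beta(G)$, the upper bound is essentially free, so almost all the work is in showing two cops fail.
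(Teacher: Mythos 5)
Your proposal is correct and follows essentially the same route as the paper: the upper bound comes for free from $\zeta(G)\le\beta(G)\le 2k-3=3$ via Theorem~\ref{thm:MooreMD}, and the lower bound is a robber strategy that, against every adaptive two-cop placement, maintains two indistinguishable candidate vertices (the paper makes this concrete by noting that two cops realize at most six distinct distance vectors while $|N[x]\cup N[y]|$ is $7$ or $6$ for the candidate pair $\{x,y\}$, and handling the adjacent case by a symmetry-reduced check, exactly the kind of invariant-plus-symmetry argument you outline).
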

\begin{proof}
The upper bound follows from Theorem~\ref{thm:MooreMD}, so we focus on the lower bound. We play with two cops, and show that this is insufficient to capture the robber.
Suppose that the cops could find that the robber was on either vertex $x$ or $y$, but did not know which one.
During the robber turn, the robber moves to some vertex in $N[x] \cup N[y]$.
If we suppose that the cops will be able to capture the robber on the next turn, then $x$ and $y$ must be adjacent.
To see this, if we suppose that $x$ and $y$ are not adjacent, then $|N[x] \cup N[y]| = 7$ and there are at most six distinct distance vectors that the cops may use to distinguish them, and so at least two vertices in $N[x] \cup N[y]$ cannot be distinguished using distance vectors.
So suppose that $x$ and $y$ are adjacent.
As $|N[x] \cup N[y]|=6$ and there are six distinct distance vectors, the cops must be placed so that each of the six possible distance vectors occur within the vertices of $N[x] \cup N[y]$.
This means that the cops must be placed on vertices in $N[x] \cup N[y]$, so that two distance vectors containing $0$'s will occur.
It also means that the cops must be distance $2$ from each other, so the distance vector $(1,1)$ occurs.
Up to symmetry, there is only one way to place the cops to have these two properties. However, under this placement, there are two vertices in $N[x] \cup N[y]$ that are indistinguishable.
As such, the robber is not caught during the next turn, and can continue to evade capture indefinitely.
\end{proof}

The following theorem determines the localization number of the remaining cases.

\begin{theorem}\label{one}
If $G$ is a Moore graph of diameter $2$ that is $k$-regular with $k \geq 5$, then $\zeta(G)$ is one of $k-1$ or $k$.
\end{theorem}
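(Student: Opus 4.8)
I would first record the two structural facts that drive everything: since $G$ is a Moore graph of diameter $2$ and girth $5$, it is strongly regular with parameters $(k^2+1,k,0,1)$, so any two adjacent vertices have no common neighbour and any two non-adjacent vertices have exactly one common neighbour. From this one extracts the key \emph{specialization fact}: for any vertex $u$, a cop placed at a vertex $c\neq u$ is at distance at most $1$ from exactly one vertex of $N(u)$ (and at distance $2$ from the other $k-1$), whereas a cop placed at $u$ is at distance $1$ from all of $N(u)$. Hence a single cop separates at most one vertex of $N(u)$ from the rest, and the remaining vertices of $N(u)$ receive a common probe value from that cop. This mirrors the mechanism already used in the proof of Theorem~\ref{thm:MooreMD}.

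\textbf{Lower bound ($\zeta(G)\ge k-1$).} The plan is to show $k-2$ cops cannot win by giving the robber an explicit invariant: after each of his moves, the set $B$ of positions consistent with all probes contains a full neighbourhood $N(u)$. For the base case, with $k-2$ cops the vertices within distance $1$ of some cop number at most $(k-2)(k+1)=k^2-k-2$, so at least $k+3\ge 2$ vertices lie at distance $2$ from every cop; the robber starts at such a vertex $a$, is not located in round $1$, and after moving leaves $B\supseteq N[a]\supseteq N(a)$. For maintenance, suppose $B\supseteq N(u)$ entering a round. By the specialization fact the at most $k-2$ cops not at $u$ specialize at most $k-2$ vertices of $N(u)$, so at least two vertices $a,b\in N(u)$ are at distance $2$ from every cop not at $u$ (and at distance $1$ from a cop at $u$, if present); thus $a,b$ share a distance vector and lie in a common cell of the probe partition. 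The robber keeps both as consistent positions, so he is not located, and since $a\in B$, after he moves the new consistent set contains $N[a]\supseteq N(a)$; the invariant persists with $u'=a$. As the consistent set always retains a cell of size at least $2$, the robber evades forever.

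\textbf{Upper bound ($\zeta(G)\le k$).} Here I would exhibit a strategy for $k$ cops, exploiting that the lower-bound obstruction is exactly two cops short: with $k$ cops one \emph{can} specialize all $k$ vertices of a neighbourhood simultaneously. The engine is that if, entering a round, the set of consistent robber positions is contained in a \emph{known} closed neighbourhood $N[a]$, then placing the cops on $N(a)$ locates the robber immediately (a probe of $0$ if he is in $N(a)$, and the unique all-$1$ vector if he is at $a$). The strategy proceeds in two phases. In the first phase the cops probe on successive neighbourhoods---on $N(u)$, then on the neighbourhood of the identified common neighbour, and so on---to track the robber with a one-round delay: after two rounds the cops know the robber's previous vertex $v_0$ exactly and know his current position lies in $N(v_0)$ with one known vertex deleted. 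In the second phase the two surplus cops are deployed to convert this delayed knowledge into capture, by a placement that simultaneously pins $v_0$ and separates the forward branches available to the robber.

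\textbf{Main obstacle.} The crux, and the step I expect to be hardest, is that the robber's move expands the consistent set out of any single neighbourhood: once a neighbourhood probe localizes him to a group of $k-1$ vertices inside some $N(c)$, his next move regenerates a configuration of the same shape, namely a neighbourhood with one known vertex deleted, again of size $k-1$, and a single-neighbourhood probe provably cannot shatter it. Indeed, the $k-1$ forward neighbours of the robber's vertex are mutually non-adjacent with a single common neighbour, so by the specialization fact they cannot all be separated by $k$ cops in one round; this produces a persistent size-$(k-1)$ stalemate. Breaking it is the heart of the argument: one must show that the two cops beyond the $k-2$ the robber can stalemate, together with the one-round-delayed knowledge of his trajectory and the girth-$5$ structure (where $k\ge 5$ guarantees the uniqueness of common neighbours and the absence of the short cycles invoked throughout), eventually force a round whose entering consistent set is small and structured enough to be resolved to a single vertex by $k$ cops.
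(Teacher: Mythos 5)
Your lower bound is correct and is essentially the paper's own argument: by girth $5$, every cop not on $u$ is within distance $1$ of at most one vertex of $N(u)$, so $k-2$ cops leave two vertices of $N(u)$ with identical distance vectors, and your count $(k-2)(k+1)<k^2+1$ supplies the initial indistinguishable pair. The paper proves this half the same way, just without your explicit invariant; your write-up of it is fine.

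The upper bound, however, has a genuine gap, and you have located it yourself: everything after ``Breaking it is the heart of the argument'' is a statement of what remains to be proved, not a proof. You correctly observe that probing one full neighbourhood at a time stalls, with the consistent set regenerating as a punctured neighbourhood of size $k-1$, but you never exhibit a placement that makes progress; ``the two surplus cops \ldots eventually force a round whose entering consistent set is small'' is exactly the content of $\zeta(G)\le k$. The paper breaks the stalemate with a shrinking induction built on a \emph{split} placement. If the robber is known to lie in $A\subseteq N(u)$ with $|A|=k-\alpha$, the cops do not cover all of $A$: they choose $v\in A$, place $k-1-\alpha\ge 2$ cops on $A\setminus\{v\}$ and $\alpha+1\ge 2$ cops on a set $B\subseteq N(v)\setminus\{u\}$. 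Girth $5$ then makes every outcome decisive: if the robber moved to $u$, all cops on $A\setminus\{v\}$ probe $1$ and he sits at their unique common neighbour; if he stayed at $v$, all $B$-cops probe $1$; a probe of $0$ catches him on $A\setminus\{v\}$; if all cops probe $2$, he lies in the $k-2-\alpha$ uncovered vertices of $N(v)\setminus\{u\}$; and otherwise exactly one cop on $A\setminus\{v\}$ probes $1$, placing him in that cop's punctured neighbourhood, where a $B$-cop probing $1$ pins him at a unique common neighbour while each $B$-cop probing $2$ eliminates the single vertex of that neighbourhood it is close to (these eliminated vertices are distinct, again by girth $5$). In every branch the robber is caught or the uncertainty set drops to at most $k-1-\alpha$ vertices inside a known neighbourhood, so it strictly shrinks each round; your claimed persistent stalemate is valid only against single-neighbourhood placements. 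A separate endgame finishes from $|A|=\{a_1,a_2\}$-type configurations: two cops on $N(a_1)\setminus\{u\}$, $k-3$ cops on $N(a_2)\setminus\{u\}$ chosen non-adjacent to them, and the last cop on $u$, with $4$-cycle-freeness ruling out ambiguous probe patterns. Without this (or an equivalent) progress mechanism, your proposal establishes only $\zeta(G)\ge k-1$, not the theorem.
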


\begin{proof}
We show that $\zeta(G) \leq  k$ by giving a winning strategy with $k$ cops.

To initialize, we show that in a small number of moves, the cops are able to identify that the robber is either on a set of $k-1$ vertices that are in $N(v)$, for some vertex $v$.
Let $x \in V(G)$ and $y,z \in N(x)$.
We place $k-1$ cops on the vertices of $N(x)\setminus \{y\}$ and one cop on a vertex $w \in N(z)\setminus \{x\}$.
If all cops on $N(x)$ probe $1$, then the robber is identified to be on $x$.
If only one cop on $v\in N(x)$ probes $1$ and the cop on $w$ probes $2$, then the robber is identified to be on a set of $k-1$ vertices of $N(v)$.
If no cop on $N(x)$ probes one and the cop on $w$ probes $2$, then the robber is identified to be on a set of $k-1$ vertices of $N(y)$ or on $y$.
In all other cases, the robbers location is found exactly.

Assuming that the robber is identified to be on either vertex $y$ or in a set of $k-1$ vertices that are in $N(y)$, the cops play on the $k-1$ vertices of $N(y) \setminus \{u\}$, where $u$ is the unique vertex of $N(y)$ that is known to not contain the robber. We also place a cop on a vertex in $N(z) \setminus \{y\}$, where $z \in N(y) \setminus \{u\}$. By an identical analysis to what was just performed above, we can find that the cops are able to identify that the robber is on a set of $k-1$ vertices that are in a $N(v)$, for some $v \in N(y)$, or is caught.

%

\begin{figure}[h!]
\centering
\includegraphics{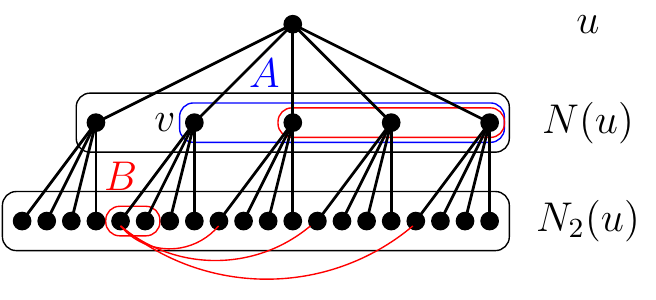}
\caption{The depiction of a (hypothetical) $5$-regular graph with diameter $2$ and girth $5$, with most of $u$'s second neighbors' edges omitted.}\label{fig:ecc2girth5Upper_4}
\end{figure}

We work inductively on $\alpha$, starting from $\alpha=1$ and increasing until $\alpha = k-3$.
See Figure~\ref{fig:ecc2girth5Upper_4} for a figure depicting this situation, with most of the edges between the second neighbors of $u$ omitted.
Suppose that the cops discover that for some $u \in V(G)$, the robber is on a subset $A \subseteq N(u)$ of cardinality $k- \alpha$.
The robber moves.
Let $v \in A$.
Select $\alpha+1$ vertices in $N(v) \setminus \{u\}$, which we label as $B$.
The cops place $k-1-\alpha \geq 2$ cops on the $k-1-\alpha$ vertices of $A\setminus\{v\}$, and $\alpha+1\geq 2$ cops on $B$.
The cops can identify the exact robber location if the robber moved to $u$ (all cops on $A$ probe a distance of $1$, which is at least two distinct cops).
The cops can identify the exact robber location if the robber stayed on $v$ (all cops in $B$ probe a distance of $1$, which is at least two distinct cops).
The cops can identify the exact robber location if the robber stayed on $A\setminus \{v\}$ (a cop in $A$ probes a distance of $0$).
The cops can identify if the robber moved to the set of $k-1-\alpha$ vertices of $B$ that does not contain a cop (all cops probe a distance of $2$).
If a cop $C_1$ on $A$ probes a distance of $1$ and another cop on $A$ probes a distance of $2$, then the robber is on $N(C_1) \setminus \{u\}$.
Further, if another cop $C_2$ on $B$ probes a distance of $1$, then the exact location of the robber is the unique vertex in the intersection $N(C_1) \cap N(C_2)$.
If instead all $\alpha+1$ cops on $B$ probe a distance of $2$, then the robber is on the set of $k-2-\alpha$ vertices of $N(C_1) \setminus \{u\}$ not adjacent to the $\alpha$ cops on $B$.
Hence in all cases, if the robber is not found, then the cops discover that for some $u' \in V(G)$, the robber is on a subset $A' \subseteq N(u')$ of cardinality at most $k-1- \alpha$.

Applying the induction yields a situation where, if the robber is not yet captured, the cops discover that for some $u \in V(G)$, the robber is on a subset $\{a_1,a_2\} \subseteq N(u)$.
The cop player places two cops on $N(a_1)\setminus\{u\}$ and $k-3$ cops on $N(a_2)\setminus\{u\}$, such that none of the cops are adjacent.
We may place the cops such that they are not adjacent, as the two cops on $N(a_1)\setminus\{u\}$ are adjacent to exactly two vertices of $N(a_2)\setminus\{u\}$, leaving $k-3$ vertices in $N(a_2)\setminus\{u\}$ not adjacent to the two cops.
There is one as-yet unplaced cop and we place the final cop on $u$.
If the cop on $u$ probes a distance of $0$, then the robber is on $u$.
If both cops on $N(a_1)\setminus\{u\}$ probe a distance of $1$, then the robber is on $a_1$.
If all cops on $N(a_2)\setminus\{u\}$ probe a distance of $1$, then the robber is on $a_2$.
If only one cop $C$ in $N(a_2)\setminus\{u\}$ probes a distance of $1$, then the robber is on the unique vertex of $N(a_1)\setminus\{u\}$ that is adjacent to $C$.
If only one cop $C$ in $N(a_1)\setminus\{u\}$ probes a distance of $1$, then the robber is on the unique vertex of $N(a_2)\setminus\{u\}$ that is adjacent to $C$.
All situations are covered; otherwise, it would imply the existence of a $4$-cycle, so the robber's exact location is found. The upper bound follows.

To complete the proof, we derive the lower bound.
We play with $k-2$ cops and show that the robber can evade capture.
Suppose that the robber was on some vertex $u$, and then the robber took his turn by moving or remaining on $u$.
We show that $N(u)$ contains two vertices that are indistinguishable by the cops, and hence, the robber has a starting move.
Every cop that will be played is either on $u,$ on $N(u)$, or on a neighbor of $N(u).$ If a cop is on $u,$ then they cannot distinguish between the points on $N(u).$  If a cop is on $N(u)$, then the cop has distance $0$ from one vertex in $N(u)$ and distance $2$ from all other vertices in $N(u)$. Finally, if  a cop is on a neighbor of some $v \in N(u)$, then that cop has distance $1$ from one vertex in $N(u)$ and distance $2$ from all other vertices in $N(u)$. Hence, no matter how we place the $k-2$ cops, only $k-2$ vertices of $N(u)$ can be uniquely distinguished from each other, leaving two vertices that cannot be distinguished by the cops, and the robber chooses one of these two to move to.
As long as the robber can play an initial move, then this implies that the robber may avoid capture indefinitely.
During the initial move, the robber chooses a vertex $u$ that cannot be distinguished by the cops. \end{proof}

By Theorem~\ref{one}, the localization number of the Hoffman-Singleton graph is either $6$ or $7$.

\subsection{Polarity Graphs} \label{sec:polarity}

Fix $q$ a prime power.
For a given projective plane $\mathrm{PG}(2,q)$ with points $P$ and lines $L$, a \emph{polarity} $\pi :P \rightarrow L$ is a bijection mapping points to lines such that $v\in \pi(u)$ whenever $u\in\pi(v)$.
The \emph{polarity graphs} are formed on vertex set $P$ by joining distinct vertices $u$ and $v$ if $u\in \pi (v)$ and $u \neq v$.
Polarity graphs have $q^2+q+1$ vertices.
The vertices $u$ with $u\in \pi(u)$ are called \emph{absolute} vertices, and have degree $q$, while all other points have degree $q+1$.
Baer~\cite{b} showed that there must be at least $q+1$ absolute vertices.
Polarity graphs are without 4-cycles~\cite{bef}, have $q(q+1)^2/2$ edges, have diameter $2$, and possess unbounded chromatic number
as $q\rightarrow \infty$~\cite{godsil}.

Those polarity graphs with exactly $q+1$ absolute vertices arise from an \emph{orthogonal} polarity (which exists for all $\mathrm{PG}(2,q)$), and such graphs are known as  the Erd\H{o}s-R\'{e}nyi graphs, written $\mathrm{ER}(q)$.
The \emph{Erd\H{o}s-R\'{e}nyi graphs}, have vertices as the points of $\mathrm{PG}(2,q),$ and $u$ is
adjacent to $v$ if $u^{T}v=0,$ where we identify vertices with 1-dimensional
subspaces of $\mathrm{GF}(q)^{3}.$
These are well-known examples of graphs
which are $C_{4}$-\emph{free} \emph{extremal}, in the sense that they
have the largest possible number of edges in a $C_{4}$-free graph on $q^2+q+1$ vertices; see \cite{brown,erdos}.
For more on polarity graphs, see \cite{mub}.

Polarity graphs were studied for the game of Cops and Robbers in~\cite{BB}, where bounds were given on the cop number. We provide lower and upper bounds on the metric dimension and localization number of polarity graphs.

\begin{theorem}\label{thm:dim_polarity}
If $G$ is a polarity graph with order $q^2+q+1$, then \[2q-5 \leq \beta(G) \leq 2q-1.\]
\end{theorem}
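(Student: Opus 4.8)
The plan is to reduce the whole statement to a clean combinatorial condition on the \emph{trace} $N(x)\cap S$ of each vertex. Viewing a resolving set $S$ through the one-round localization game, a pair $x,y\notin S$ is resolved precisely when some $s\in S$ is adjacent to exactly one of $x,y$, that is, when $N(x)\cap S\neq N(y)\cap S$; pairs meeting $S$ are resolved automatically since a cop on such a vertex probes $0$. Thus $S$ is a resolving set if and only if the map $x\mapsto N(x)\cap S$ is injective on $V(G)\setminus S$. Because $G$ has no $4$-cycle, any two distinct vertices share at most one common neighbour, so if two vertices have equal trace, that common trace has size at most $1$. Consequently $S$ is resolving if and only if at most one vertex of $V(G)\setminus S$ has empty trace and, for each $s\in S$, at most one vertex of $V(G)\setminus S$ has trace exactly $\{s\}$. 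This equivalence drives both bounds.

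For the lower bound I would double count the edges between $S$ and $\bar S=V(G)\setminus S$. Write $\sigma=|S|$ and $N=q^2+q+1$. By the equivalence, all but at most $1+\sigma$ vertices of $\bar S$ (at most one with empty trace, and at most one singleton-trace vertex for each of the $\sigma$ points of $S$) have at least two neighbours in $S$. Counting incidences from the $\bar S$ side from below, and from the $S$ side from above using that every vertex has degree at most $q+1$, gives
\begin{equation*}
\sigma(q+1)\ \ge\ \sum_{x\in\bar S}|N(x)\cap S|\ \ge\ 2(N-\sigma)-2-\sigma .
\end{equation*}
Rearranging yields $\sigma(q+4)\ge 2N-2=2q^2+2q$, hence $\sigma\ge (2q^2+2q)/(q+4)=2q-6+24/(q+4)>2q-6$, so $\sigma\ge 2q-5$. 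The only inputs are the no-$4$-cycle property and the degree bound $q+1$, and the bound comes out exactly as stated.

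For the upper bound I would use the polarity $\pi$ to convert the trace condition into a statement about point sets in $\mathrm{PG}(2,q)$. Since $N(x)$ consists of the points on the line $\pi(x)$ (less $x$ itself when $x$ is absolute, but $x\notin S$ in either case), the trace of $x$ equals $S\cap\pi(x)$; and as $x$ ranges over $V(G)\setminus S$ the line $\pi(x)$ ranges over all lines other than $\{\pi(s):s\in S\}$. So it suffices to exhibit a set $S$ of exactly $2q-1$ points that meets all but at most one line in at least two points, and whose tangent lines (those meeting $S$ in exactly one point) are spread out, with at most one tangent through each point of $S$. Any such $S$ then yields a resolving set of the required size by the equivalence above.

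The main obstacle is building such a set within the tight budget $2q-1$. The naive choice $S=N(u)\cup N(v)$ for adjacent $u,v$, the analogue of the Moore-graph construction, fails here: the two lines $\pi(u)$ and $\pi(v)$ meet in a point $p$, and the entire pencil of $q-1$ remaining lines through $p$ is tangent to $S$ at $p$, producing $q-1$ vertices all with trace $\{p\}$ and hence many unresolved pairs. I would instead take a blocking configuration of size exactly $2q-1$, for instance a line together with carefully chosen off-line points (or a conic-based set), and verify the two conditions directly. The delicate point, and the crux of the whole upper bound, is guaranteeing that no point of $S$ carries two tangent lines and that at most one line misses $S$ entirely, which requires a genuine finite-geometry argument rather than the case analysis that sufficed in the girth-$5$ Moore setting.
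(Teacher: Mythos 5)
Your lower bound is correct and is, in substance, the paper's own argument: the paper also counts occurrences of distance $1$ (equivalently, edges between $S$ and its complement), uses the degree bound $q+1$, allows one all-$2$'s vertex and at most one singleton-trace vertex per cop, and arrives at exactly your inequality $\sigma(q+4)\geq 2q^2+2q$, i.e.\ $\sigma \geq 2q-6+24/(q+4)>2q-6$. Your trace formulation, and the observation that the absence of $4$-cycles makes any two traces of size at least $2$ automatically distinct, is also precisely the right lens for the upper bound.

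The upper bound, however, has a genuine gap: you never produce the set $S$. You reduce the problem to finding $2q-1$ points meeting almost every line at least twice with controlled tangencies, explicitly call this "the crux," and then defer it to an unspecified blocking-set or conic configuration. Moreover, the construction you dismiss is the one that works. Your objection is valid only for a generic adjacent pair: if neither $u$ nor $v$ is absolute, then $\pi(u)$ and $\pi(v)$ meet at a third point $p$, which lies in $S=(N(u)\cup N(v))\setminus\{u,v\}$, and the other $q-1$ lines through $p$ are tangent to $S$ at $p$; since $\pi(p)$ meets $\pi(u)\cup\pi(v)$ only in $\{u,v\}$, the poles of those tangent lines lie outside $S$, so many vertices share the trace $\{p\}$ and $S$ fails to resolve. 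The paper avoids this by taking $u$ to be an \emph{absolute} vertex (degree $q$; one exists by Baer's theorem) and $v\in N(u)$, necessarily non-absolute. Then $u\in\pi(u)$ and $u\in\pi(v)$ force $\pi(u)\cap\pi(v)=\{u\}$: the troublesome pencil point is $u$ itself, which is excluded from $S$. The $q-1$ lines through $u$ other than $\pi(u),\pi(v)$ do miss $S$ entirely, but they are exactly the lines $\pi(x)$ with $x\in N(u)\setminus\{v\}\subseteq S$, so they create no unresolved pair. Every vertex $x\notin S\cup\{u,v\}$ has $u\notin\pi(x)$, hence $\pi(x)$ meets $\pi(u)\setminus\{u,v\}$ and $\pi(v)\setminus\{u\}$ in two distinct points of $S$, giving trace size at least $2$; and $u,v$ themselves have traces of sizes $q-1$ and $q$. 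By your own no-$4$-cycle observation all these traces are pairwise distinct, so $S$ is resolving, with $|S|=q+(q+1)-2=2q-1$. In short, the missing "genuine finite-geometry argument" is one sentence long: choose $u$ absolute, so that the pencil point is $u\notin S$ and its pencil consists of lines whose poles already carry cops.
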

\begin{proof}
We first show the lower bound. Let $\alpha = \beta(G)$.
We aim to show that $2q-6$ cops are insufficient to capture the robber on the first round of play in the localization game.
We will assume that the cops pick a minimum sized resolving set of $G$, and analyze the distance vector for each $v \in V(G)$.
To each vertex in the graph, we assign the distance vector that would result if the robber chose this vertex.
Note that these distance vectors will contain only the symbols $0$, $1$, and $2$.
Exactly $\alpha$ vertices will have a distance vector that contains the symbol $0$, which are those vertices in the resolving set.
There are $q^2+q+1-\alpha$ remaining vertices that must be resolved by the resolving set.
At most one vertex will have distance $2$ to all cops in the resolving set, as if there were two, they would be indistinguishable.
Each vertex of the resolving set will have distance $1$ to at most $q+1$ vertices in $V(G)$.
The total number of occurrences of $1$ within all of the distance vectors must be exactly $(q+1) \alpha$.

At most $\alpha$ vertices will have distance $1$ to one cop and distance $2$ to all other cops.
We may assume that exactly  $\alpha$ vertices will have distance $1$ to one cop and distance $2$ to all other cops, as this is strictly beneficial to the cops under this argument.
There are $q \alpha$  occurrences of $1$ within the remaining distance vectors; that is, the distance vectors for vertices that have distance $1$ to at least two vertices in the resolving set.
As a result, there can be at most $q \alpha /2$ vertices with distance $1$ to at least two vertices.
This implies that there are at most $1+\alpha + q \alpha /2$ vertices distinguished by the $\alpha$ cops.
As we found before, there are $q^2+q+1-\alpha$ vertices still to be distinguished, and so we must have $q^2+q+1-\alpha \leq 1+\alpha + q \alpha /2$.
We find after rearranging that $2q - 6 +24/(q+4) \leq \alpha$, from which the lower bound of the result follows.

To show the upper bound, let $u$ be a vertex of degree $q$.
There must be $q^2$ vertices in the second neighborhood of $u$.
For this reason, each vertex in $N(u)$ must have degree $q+1$ and no $C_3$ can contain $u$, as each vertex in $N(u)$ must correspond to a set of $q$ vertices in the second neighborhood.
An equivalent way to say that $u$ is not contained in a $C_3$ is that $N[v_1] \cap N[v_2] = \{u\}$ for any $v_1,v_2 \in N(u)$.
Let $v \in N(u)$.
We place a cop on each of the vertices $(N(u)\cup N(v)) \setminus \{u,v\}$.
We claim that the cops can identify the robber's location during the first probe. If the robber is on $u$, then all cops on $N(u) \setminus \{v\}$ probe a distance of $1$ to the robber.
If the robber is on $v$, then all cops on $N(v) \setminus \{u\}$ probe a distance of $1$ to the robber.
If the robber is located on $(N(u)\cup N(v)) \setminus \{u,v\}$, then some cop probes a distance of $0$ to the robber.
Otherwise, the robber has distance $2$ to $u$ and distance $2$ to $v$.
Note that there are $q^2-q$ vertices in $A = V \setminus (N(u) \cup N(v))$.

There are $q^2-q = q(q-1)$ paths of length two from vertices in $N(u)\setminus\{v\}$ to vertices in $N(v)\setminus\{u\}$, and these cannot intersect $u$ or $v$. As such, all such paths will intersect with a vertex in $A$.
Further, if some vertex in $A$ is contained in two such paths, then a 4-cycle exists in the graph.
So by a pigeon-hole argument, each vertex in $A$ is contained in exactly one path of length two from $ N(u)\setminus\{v\}$ to $N(v)\setminus\{u\}$, say from vertex $b_1$ to vertex $b_2$.
The cops on $b_1$ and $b_2$ are the only ones that probe a distance of $1$ from the robber if the robber is on the vertex in $N(b_1) \cap N(b_2)$. Hence, the cops can uniquely identify the robber's position if the robber is on $A$.
The upper bound now follows.
\end{proof}

Theorem~\ref{thm:dim_polarity} provides us an upper bound for the localization number of a polarity graph. We also derive a lower bound.

\begin{theorem}\label{finalt}
If $G$ is a polarity graph with order $q^2+q+1,$ then \[\frac{2q-5}{3} \leq \zeta(G) \leq 2q-1.\]
\end{theorem}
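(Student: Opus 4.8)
The plan is to establish the lower bound $\zeta(G) \geq (2q-5)/3$, since the upper bound $\zeta(G) \leq 2q-1$ follows immediately from $\zeta(G) \leq \beta(G)$ and Theorem~\ref{thm:dim_polarity}. The natural approach mirrors the lower-bound argument for the metric dimension in Theorem~\ref{thm:dim_polarity}, but adapted to the weaker localization setting: instead of resolving all vertices in one round, the cops need only resolve the neighborhood of a single vertex where the robber currently sits. I would argue that if the robber occupies a vertex $u$, then on the final successful round the cops must distinguish among all vertices to which the robber could have moved, and a counting argument on how many such vertices a single cop can usefully resolve will force a lower bound on the number of cops.

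First I would set $g = \zeta(G)$ and consider a round in which the robber, located at some vertex $u$, is about to be captured. The robber could move to any vertex in $N[u]$, so the cops must produce distinct distance vectors for all vertices in $N[u]$, which has cardinality at least $q+1$ (since $u$ has degree $q$ or $q+1$). The key geometric input, exactly as exploited in Theorem~\ref{thm:dim_polarity} and throughout Section~\ref{secc4}, is the $C_4$-freeness of polarity graphs: any two distinct vertices have at most one common neighbor. I would use this to bound, for each cop, how many vertices of $N[u]$ it can separate. A single cop at distance $1$ from some vertices of $N[u]$ can probe distance $1$ to at most the vertices of $N[u]$ lying in its own neighborhood; because of the no-$C_4$ condition, a cop not in $N[u]$ is adjacent to at most one vertex of $N[u]$, while a cop inside $N[u]$ resolves itself (distance $0$) and is at distance $2$ from the others. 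This is where the factor of roughly $3$ should enter: each cop contributes only a bounded amount of resolving power against the $q+1$ candidate vertices, and tallying the total (distance-$0$ hits, distance-$1$ hits, and the single possible all-distance-$2$ vertex) against the requirement that roughly $q$ vertices of $N(u)$ be pairwise distinguished yields $g \geq (2q-5)/3$ after rearrangement.

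The main obstacle will be making the counting tight enough to land the constant $(2q-5)/3$ rather than a weaker bound, and handling the subtlety that in the localization game the robber is invisible and moving, so the cops do not get to choose a single static resolving set aimed at $N[u]$. I would address this by arguing that whatever vertices the cops occupy, the induced distance vectors restricted to $N[u]$ must be pairwise distinct, and then bounding the number of distinct vectors achievable with $g$ cops. The delicate accounting is classifying the vertices of $N(u)$ by their resolving type: at most $g$ of them can be occupied by a cop (distance $0$), at most one can be at distance $2$ from every cop, and the remainder must be separated using distance-$1$ probes; since each cop yields at most one distance-$1$ relationship to $N(u)$ by the $C_4$-free property, two cops are effectively needed to uniquely pin down each remaining vertex, giving the division by (something near) $3$ once the occupied and all-$2$ vertices are subtracted. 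I would finish by rearranging the resulting inequality of the shape $q+1 - g - 1 \leq g \cdot (\text{small constant})$ to extract $\zeta(G) \geq (2q-5)/3$, taking care with the floor/degree discrepancy between absolute and non-absolute vertices.
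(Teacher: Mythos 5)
The upper bound in your proposal is exactly the paper's route ($\zeta(G) \le \beta(G)$ together with Theorem~\ref{thm:dim_polarity}), so that part is fine. The gap is in the lower bound, and it is quantitative but fatal to the stated constant. Your candidate set is the closed neighborhood $N[u]$ of the robber's \emph{single} current position, which has only $q+1$ or $q+2$ vertices. Even granting your counting in its strongest form --- at most $g$ vertices of $N[u]$ hold a cop, at most one vertex is at distance $2$ from every cop, at most one vertex per cop can have exactly one distance-$1$ entry, and (by $C_4$-freeness) each cop is at distance $1$ to at most two vertices of $N[u]$ --- the conclusion is $q+1 \le 3g+O(1)$, i.e.\ $\zeta(G) \ge q/3 - O(1)$. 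Your own sketched inequality, of the shape $q+1-g-1 \le g\cdot(\text{small constant})$, makes this explicit: it rearranges to $g \ge (q-1)/(\text{const}+1)$, and no bounded integer per-cop constant can produce a coefficient $2/3$ in front of $q$. So this approach proves a true but weaker statement; it is a factor of $2$ short of $(2q-5)/3$, and the ``main obstacle'' you flag cannot be overcome within the single-neighborhood framing.

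The paper's proof gets the factor of $2$ by changing the invariant: the robber maintains not one known position but \emph{two} positions $u_1,u_2$ that the cops' previous probes could not distinguish, and the candidate set for the next round is $A_1 \cup A_2$, where $A_1 = N(u_1)\setminus N[u_2]$ and $A_2 = N(u_2)\setminus N[u_1]$. Since the graph is $C_4$-free, $|N(u_1)\cap N(u_2)|\le 1$, so $|A_1\cup A_2| \ge 2q-4$, while the per-cop resolving power against this set is still at most $3$: a cop on $u_1$ (or $u_2$) only separates $A_1$ from $A_2$ wholesale and distinguishes nothing within either set, and any other cop is at distance $0$ to at most one vertex and at distance $1$ to at most one vertex of each $A_i$. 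Counting as you do, but over $A_1\cup A_2$, gives $3\alpha + 1 \ge 2q-4$ for the $\alpha$ cops off $\{u_1,u_2\}$, hence $\alpha \ge (2q-5)/3$. So the essential missing idea is to run your classification (distance-$0$ hits, distance-$1$ hits, the single all-distance-$2$ vertex) against the union of neighborhoods of two indistinguishable vertices rather than against the neighborhood of the robber's actual vertex; with that change, the rest of your outline goes through essentially as the paper's argument does.
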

\begin{proof}
For the lower bound, suppose for a contradiction that $\zeta(G) < (2q-5)/3$ and that the cops can capture the robber. We first show the inductive step.
To show that the robber always has a move that he can make during each turn, suppose that there are two vertices that the cops cannot distinguish while probing on their previous turn, $u_1$ and $u_2$.
Let $A_1 = N(u_1) \setminus N[u_2]$ and $A_2 = N(u_2) \setminus N[u_1]$.
We will assume that the robber only moves to a vertex in $A_1 \cup A_2$, which only serves to weaken the robber's strategy.
As the graph contains no 4-cycle, $|N(u_1) \cap N(u_2)|\leq 1$.
As a result,  $|A_1| \geq q-2$ and $|A_2| \geq q-2$ and so $|A_1 \cup A_2| \geq 2q-4$.

If a cop is placed on $u_1$ or $u_2$, then the cops will have distance $1$ to all vertices in $A_1$ and distance $2$ to all vertices in $A_2$, or vice versa.  Any cop that is not on $u_1$ or $u_2$ will have distance $1$ to at most one vertex of $A_1$ and at most one vertex of $A_2$, and distance $2$ to all other vertices of $A_1 \cup A_2$.
In addition to this, any cop on $A_1\cup A_2$ will have distance $0$ to exactly one vertex of $A_1 \cup A_2$.

For now, we suppose that there is a cop on $u_1$.
In this case, placing a cop on $u_2$ provides no additional information.
If a cop that is placed on a vertex $u_3 \in A_1\cup A_2$ directly helps to capture the robber, then either the cop on $u_3$ probed $0$ and the robber is identified to be on $u_3$; or the cops on $u_1$ and $u_3$ probed $1$ and the robber is identified to be on the unique vertex of $A_1$ of distance $1$ from $u_3$; or the cop on $u_3$ probed $1$ and the cop on $u_1$ probed $2$, and so the robber is identified to be on the unique vertex of $A_2$ of distance $1$ from $u_3$.

As a result, if we have $\alpha$ cops not in $\{u_1,u_2\}$, then there are $\alpha$ vertices of $A_1 \cup A_2$ that can be immediately resolved as they each have a cop of distance $0$ from them, and up to a further $2\alpha$ vertices that can be resolved as they have distance $1$ to a cop, and at most one vertex that can be resolved as it has distance $2$ to all cops not on $u_1$ or $u_2$. This implies that at most $3\alpha+1$ vertices of $A_1 \cup A_2$ can be resolved.
We must then have that $3\alpha+1 \geq 2q-4$, which is a contradiction as we have assumed there are less than $(2q-5)/3$ cops, and we have just shown that we require at least $\alpha+1$ cops.
Now note that if we do not have a cop on $u_1$ (or likewise $u_2$), then we likewise require more than $\alpha$ cops, and obtain the same contradiction. For the initial case, the robber considers two arbitrary vertices $u_1$ and $u_2.$ By the above analysis, there is a vertex in $N[u_1] \cup N[u_2]$ that cannot be distinguished by the cops. \end{proof}

\end{document}